\documentclass[12pt,a4paper]{amsart}  
\usepackage{amsthm}
\usepackage{graphicx} 
\usepackage{color}

\usepackage{calrsfs}

\topmargin  -5mm
\evensidemargin 10mm
\oddsidemargin  10mm
\textwidth  152mm
\textheight 218mm


\newcommand{\R}{\mathbb{R}}
\newcommand{\C}{\mathbb{C}}

\newcommand {\Id}{{\mathbb I}}
\newcommand{\DD}{\mathcal{D}}
\renewcommand{\SS}{\mathcal{S}}

\newcommand{\Div}{\mathop{\mathrm{div}}}
\newcommand{\curl}{\mathop{\mathrm{curl}}}
\newcommand{\vcurl}{\mathop{\mathbf{curl}}}

\newcommand{\grad}{\mathop{\boldsymbol{\nabla}}}
\newcommand{\im}{\mathop{\mathrm{im}}}
\newcommand{\codim}{\mathop{\mathrm{codim}}}

\newcommand{\bE}{\mathbf{E}}
\newcommand{\bF}{\mathbf{F}}
\newcommand{\bff}{\mathbf{f}}
\newcommand{\bH}{\mathbf{H}}

\newcommand{\bj}{\mathbf{j}}
\newcommand{\bJ}{\mathbf{J}}

\newcommand{\bn}{\mathbf{n}}
\newcommand{\bt}{\mathbf{t}}

\newcommand{\bX}{\mathbf{X}}

\newcommand{\Sess}{\sigma_{\mathrm{ess}}}                                                                                                 

\newtheorem{theorem}{Theorem}[section]
\newtheorem{lemma}[theorem]{Lemma}
\newtheorem{proposition}[theorem]{Proposition}
\newtheorem{corollary}[theorem]{Corollary}

\theoremstyle{definition}

\theoremstyle{remark}

\numberwithin{equation}{section}


\begin{document}


\title[Volume integral equations for 2D electromagnetism]{Volume integral equations for electromagnetic scattering in two dimensions}

\author{Martin Costabel, Eric Darrigrand and Hamdi Sakly}

\address{IRMAR UMR 6625 du CNRS, Universit\'{e} de Rennes 1 }

\address{Campus de Beaulieu,
35042 Rennes Cedex, France \bigskip}

\begin{abstract}
We study the strongly singular volume integral equation that describes the scattering of time-harmonic electromagnetic waves by a penetrable obstacle. We consider the case of a cylindrical obstacle and fields invariant along the axis of the cylinder, which allows the reduction to two-dimensional problems. With this simplification, we can refine the analysis of the essential spectrum of the volume integral operator started in a previous paper
({\sc M.~Costabel, E.~Darrigrand, and H.~Sakly}, {\em The essential spectrum of
  the volume integral operator in electromagnetic scattering by a homogeneous
  body}, Comptes Rendus Mathematique, 350 (2012), pp.~193--197)
and obtain results for non-smooth domains that were previously available only for smooth domains. It turns out that in the TE case, the magnetic contrast has no influence on the Fredholm properties of the problem. As a byproduct of the choice that exists between a vectorial and a scalar volume integral equation, we discover new results about the symmetry of the spectrum of the double layer boundary integral operator on Lipschitz domains.
\end{abstract}


\maketitle

\section{Introduction}\label{S:Intro}

The scattering of electromagnetic waves by a penetrable object for a fixed frequency is described by the time-harmonic Maxwell equations, valid in the whole space, even if the scatterer is confined to a bounded region. For purposes of numerical modeling, one prefers a problem posed on a bounded domain, and various methods are known to achieve this. One way is the method of volume integral equations (VIEs) that will be described in Section~\ref{S:TE-VIE}. This method has been used for a long time in computational physics \cite{Jarvenpaa2013,Yurkin20112234}, but its mathematical analysis is still incomplete. The case of coefficients, permittivity $\epsilon$ and permeability $\mu$, that are smooth functions on the whole space has been studied more completely, because the VIE can then be reduced to standard Fredholm integral equations of the second kind, see \cite{Botha2006,BudkoSamokhin2006,Rahola2000} and the chapter on Lippmann-Schwinger equations in the classical monograph \cite{ColtonKress1998}. 

We are rather interested in the case where the coefficients are discontinuous on the boundary of the scatterer, as is the case at the interface between different materials. The well-posedness of the strongly singular VIEs is then a non-trivial problem. There exist negative or complex-valued coefficients $\epsilon$ and $\mu$ that can nowadays be realized with meta-materials, for which the scattering problem and its associated VIE are not well posed in the Fredholm sense. For the scattering problem, there has been recent progress in finding necessary and sufficient conditions for Fredholmness using variational techniques, see \cite{BBChesnelCiarlet_2D2014} for the two-dimensional situation. For the VIEs, the question amounts to determining the essential spectrum of the volume integral operators.  
The knowledge of the essential spectrum is also of basic importance for the analysis of numerical algorithms, for example for the construction of efficient iterative solution procedures, see \cite{SamokhinShestopalovKobayashi2013}. 

In \cite{CoDarSak2012}, we analyzed the VIE using a technique of extension to a coupled boundary-domain integral equation system, which eventually allows the reduction of the problem to the better known analysis of boundary integral operators. In the three-dimensional case studied in \cite{CoDarSak2012}, the analysis of the ``magnetic'' volume integral operator poses particular difficulties, which is the reason why we could describe its essential spectrum only under smoothness assumptions on the boundary and why the analysis given in \cite{Kirsch2007,KirschLechleiter2009} is incomplete for this case.

In this paper we study the two-dimensional case, where it turns out that this ``magnetic'' operator $B_{k}^{\nu}$ has a very simple structure, see subsection~\ref{SSS:mag} below, in particular Proposition~\ref{P:B0projector}. This structure implies that the operator, while still strongly singular and therefore contributing to the principal part of the VIE, does not contribute in a non-trivial way to the essential spectrum. It also suggests a new idea for the analysis of this operator in the three-dimensional case that works also when the boundary is only Lipschitz, see \cite[Section~2.4]{sakly2014} and our forthcoming paper on the subject. Thus, while the 2D case has its own interest, for example for the study of waveguides or photonic fibres, our main motivation for studying it here is that it allows a refined spectral analysis of the volume integral operators. 

Another interesting feature of the two-dimensional electromagnetic problem is that one can choose to describe it either as a vector-valued or as a scalar problem. This leads to two different VIEs. In both cases, we prove necessary and sufficient conditions on the coefficients $\epsilon$ and $\mu$ for the volume integral operators to be Fredholm. These conditions involve the essential spectrum of certain well-studied scalar boundary integral operators, and at first it looks like the two formulations lead to different conditions. They are equivalent, however, because of a symmetry property of the essential spectrum of the double layer boundary integral operator that is specific to dimension two. We prove this symmetry in Section~\ref{S:sym} by relating the boundary integral equation to a scalar transmission problem involving the coefficient $\epsilon$ and  for which one finds a symmetry with respect to replacing $\epsilon$ by $\frac1\epsilon$.

\section{The two dimensional Maxwell equations}\label{S:2DMax}

We consider an electromagnetic scattering problem described by the time-harmonic Maxwell equations in $\R^{3}$
\begin{equation}
\label{E:Max}
   \curl E - i\omega\mu H = 0\,;\qquad  
   \curl H + i\omega\varepsilon E = J\,.
\end{equation}
The current density $J$ (which may be identically zero) is related to an incident field $(E^{0},H^{0})$ by the requirement that the latter satisfies the free-space Maxwell equations
\begin{equation}
\label{E:inc}
   \curl E^{0} - i\omega\mu_{0} H^{0} = 0 \,;\qquad  
   \curl H^{0} + i\omega\varepsilon_{0} E^{0} = J 
\end{equation} 
with constant permittivity $\varepsilon_{0}$ and permeability $\mu_{0}$. 
In the Maxwell equations \eqref{E:Max}, the fields are supposed to be locally square integrable, and the equations are satisfied in the distributional sense. This means that on a surface of discontinuity of the coefficients $\mu$ and $\varepsilon$, the fields satisfy transmission conditions
\begin{equation}
\label{E:Trans3D}
 [n\times E]=0;\quad
 [n\times H]=0;\quad
 [n\cdot\mu H]=0;\quad
 [n\cdot\varepsilon E]=0\,.
\end{equation} 
The scattering problem is completed by the requirement that $E-E^{0}$ and $H-H^{0}$ satisfy a radiation condition at infinity.
Note that the incident field may contain a part generated by the current density $J$ and satisfying the radiation condition, as well as a part satisfying the homogeneous Maxwell system but not the radiation condition. 

We assume now that the penetrable obstacle is of cylindrical form $\Omega\times\R$, where $\Omega$ is a bounded Lipschitz domain in $\R^{2}$. 
In addition, we assume that the fields $E$ and $H$ as well as the incident fields do not depend on the variable $x_{3}$. 
We introduce the relative permittivity and permeability such that 
$\varepsilon=\varepsilon_{r}\varepsilon_{0}$, $\mu=\mu_{r}\mu_{0}$. 
For simplicity we further assume that $J$ has compact support disjoint from the scatterer $\Omega$.

Introducing the wave number $k$ such that $k^{2}=\omega^{2}\varepsilon_{0}\mu_{0}$ and suitably renormalizing the fields,
the resulting system of partial differential equations in $\R^{2}$ can be written as
\begin{equation}\label{E:2D_Max}
\begin{aligned}
\partial_{2}E_3-i\,kH_1&=m_1\;;\;\\
-\partial_1E_3-i\,kH_2&=m_2\;;\;\\
\partial_1E_2-\partial_2E_1-i\,kH_3&=m_3\;;\;
\end{aligned}
\begin{aligned}
\partial_{2}H_3+i\,kE_1&=j_1\;;\\
-\partial_1H_3+i\,kE_2&=j_2\;;\\
\partial_1H_2-\partial_2H_1+i\,kE_3&=j_3\;.
\end{aligned}
\end{equation}
where 
\begin{equation}
\label{E:rhs}
\mbox{
$(m_{1},m_{2},m_{3})^{\top}=ik(\mu_{r}-1)\chi_{\Omega}H$,\quad
$(j_{1},j_{2},j_{3})^{\top}=ik(1-\varepsilon_{r})\chi_{\Omega} E + J$
}
\end{equation}
and $\chi_{\Omega}$ denotes the characteristic function of $\Omega$.

It is well known (and easy to see from \eqref{E:2D_Max}) that this system can be written as two uncoupled systems corresponding to the transverse electric (TE) and transverse magnetic (TM) polarizations. 

In the TE case, only the transverse components $E_{1}, E_{2}$ of the electric field and the longitudinal component $H_{3}$ of the magnetic field appear:
\begin{equation}\label{E:TE}
\begin{aligned}
\partial_1E_2-\partial_2E_1-i\,kH_3&=m_3\;;\;\\
\partial_{2}H_3+i\,kE_1&=j_1\;;\\
-\partial_1H_3+i\,kE_2&=j_2\;.
\end{aligned}
\end{equation}
Note that we understand \eqref{E:TE} in the distributional sense, which implies that the correct transmission conditions for $H_{3}$ and the 2D tangential component of $E$  are satisfied on $\Gamma=\partial\Omega$.

The TM case is described by the remaining 3 equations of the system \eqref{E:2D_Max}:
\begin{equation}\label{E:TM}
\begin{aligned}
\partial_{2}E_3-i\,kH_1&=m_1\;;\;\\
-\partial_1E_3-i\,kH_2&=m_2\;;\;\\
\partial_1H_2-\partial_2H_1+i\,kE_3&=j_3\;.
\end{aligned}
\end{equation}

We observe the symmetry between \eqref{E:TE} and \eqref{E:TM}, which reflects the symmetry between $E$ and $H$ in the original Maxwell system. It is therefore sufficient to discuss one of the two cases in detail, and we choose the TE system \eqref{E:TE}. 

\section{Integral equation formulations for the (TE) case }\label{S:TE-VIE}

There are two ways of reducing the first-order system \eqref{E:TE} to second order by eliminating one of the two fields, and these two second-order formulations are no longer identical, one of them leading to a second order system that can be called 2D Maxwell system, and the other one to a scalar Helmholtz equation.
Correspondingly, we can construct two different volume integral equations, both of them equivalent to the TE system \eqref{E:TE}, but bringing forward different aspects of the volume integral operators. 

In the ``electric'' formulation we have a $2\times2$ system of second order partial differential equations, and correspondingly we find a vector-valued VIE. The matrix of strongly singular integral operators is the sum of an ``electric'' operator $A_{k}^{\eta}$ depending on the permittivity $\epsilon_{r}$ and a ``magnetic'' operator $B_{k}^{\nu}$ depending on the permeability $\mu_{r}$. In the three-dimensional case studied in \cite{CoDarSak2012}, the electric volume integral operator was easier to analyse, using a method of extension to a coupled boundary-domain integral equations already described in \cite{CoDarKo2010}. A much earlier analysis (where this operator is called ``magnetic'') can be found in \cite{FriedmanPasciak1984}.
The magnetic volume integral operator $B_{k}^{\nu}$ is more difficult to analyze in three dimensions, but it turns out that here in two dimensions, the operator $B_{k}^{\nu}$ is a compact perturbation of a projection operator, see Proposition~\ref{P:B0projector}. This simple structure implies that the magnetic contrast does not influence the essential spectrum, see Corollary~\ref{C:A+BFredholm}.

In the ``magnetic'' formulation, the volume integral operator is a scalar strongly singular integral operator, similar to what one gets when acoustic scattering problems are treated with the VIE method \cite{Costabel:2014arXiv1412.8685C}. The operator again splits into an electric part and a magnetic part, but this time the magnetic operator is weakly singular and therefore disappears altogether from the analysis of the essential spectrum.

For the derivation of the volume integral equation (VIE), we assume in this section that the wave number $k$ is a positive real number. Later on, seeing that the operators depend analytically on the parameter $k$, we may consider arbitrary complex values for $k$.

The TE problem consists of the system of three first order partial differential equations \eqref{E:TE} with compactly supported right hand side, completed by the Silver-M\"uller radiation condition, which is equivalent to the Sommerfeld radiation condition
\begin{equation}
\label{E:Sommerfeld}
\lim_{R\to\infty}\int_{|x|=R}|\partial_{r}u-iku|^{2}ds=0
\end{equation}
for each of the three components $u=E_{1}-E_{1}^{0},E_{2}-E_{2}^{0},H_{3}-H_{3}^{0}$ of the scattered field. Note that the incident field itself may satisfy the radiation condition if it is generated by a current density $J$ that does not vanish identically, but our formulation can also handle the case where $J$ is zero and $E^{0}$ does not satisfy the radiation condition, as is the case e.g.\ for an incident plane wave.

In our scattering problem, the right hand side contains the imposed current density and also the induced polarization, which is given by the unknown fields multiplied by the inhomogeneity in the coefficients, see equations \eqref{E:rhs}.

\subsection{The electric formulation: $E_1,E_2$ as unknowns}\label{SS:E12}

In the following, we use the two standard instantiations of the curl operator in two dimensions:
$$
 \curl \bE = \partial_{1}E_{2} - \partial_{2}E_{1}, \qquad
 \vcurl u = (\partial_{2}u, -\partial_{1}u)^{\top}
$$
and write \eqref{E:TE} as
\begin{equation}
\label{E:TEcurl}
 \curl\bE -ik H_{3} = m_{3}\,;\qquad \vcurl H_{3} + ik\bE = \bj\,.
\end{equation}
Eliminating $H_{3}$ from this system gives a second order system for the unknown $\bE=(E_{1},E_{2})^{\top}$
\begin{equation}
\label{E:PDE2-E12}
 \vcurl\curl\bE -k^{2}\bE = \vcurl m_{3}+ik\bj\,.
\end{equation}
The construction of volume integral equations is based on the following well known result.

\begin{lemma}
\label{L:repres}
Let $g_{k}(x) = \frac{i}{4}H_0^{(1)}(k|x|)$ be the fundamental solution of the Helmholtz equation in $\R^{2}$ satisfying the outgoing Sommerfeld radiation condition.\\
\mbox{\emph{(i)}}
The distribution $u$ satisfies in $\R^{2}$ the Helmholtz equation
$$
  -(\Delta+k^{2})u = f
$$
and the radiation condition \eqref{E:Sommerfeld}, where $f$ is a distribution with compact support, if and only if 
$$
  u = g_{k}* f\,.
$$
If $f$ is an integrable function, then this convolution can be written as an integral:
$$
  u(x) = \int g_{k}(x-y)\,f(y)\,dy \;.
$$
\mbox{\emph{(ii)}}
The vector-valued distribution $\bE$ satisfies in $\R^{2}$ the system
$$
  \vcurl\curl\bE -k^{2}\bE = \bff
$$
and the radiation condition, where $\bff$ has compact support, if and only if
$$
  \bE = k^{-2}(\grad\Div+k^{2}) g_{k}* \bff \,.
$$
\end{lemma}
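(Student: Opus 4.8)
The plan is to treat part~(i) as the classical scalar theory of the Helmholtz fundamental solution and then to reduce the vector-valued statement~(ii) to it by two purely algebraic identities of the two-dimensional first-order operators. For part~(i) I would start from the fact that $g_k$ is the outgoing fundamental solution, that is $-(\Delta+k^2)g_k=\delta$ and $g_k$ itself satisfies \eqref{E:Sommerfeld}; both are read off from the known asymptotics of $H_0^{(1)}$. The ``if'' direction is then immediate, since $-(\Delta+k^2)(g_k*f)=\bigl(-(\Delta+k^2)g_k\bigr)*f=\delta*f=f$, while the radiation condition is inherited from $g_k$ because $f$ has compact support. For the ``only if'' direction I would use uniqueness: if $u$ is any solution, then $u-g_k*f$ solves the homogeneous Helmholtz equation in all of $\R^2$ (as $f$ has compact support) and satisfies \eqref{E:Sommerfeld}, hence vanishes by Rellich's lemma together with unique continuation.

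The substance is in part~(ii), where the key observation is the pair of two-dimensional identities
\[
 \vcurl\curl\bE=\grad\Div\bE-\Delta\bE,\qquad \Div\vcurl=0,
\]
the first verified by expanding both sides in components, the second being nothing but $\partial_1\partial_2-\partial_2\partial_1=0$. Using the first identity the equation $\vcurl\curl\bE-k^2\bE=\bff$ becomes $\grad\Div\bE-(\Delta+k^2)\bE=\bff$. Applying $\Div$ and using $\Div\vcurl\curl=0$ annihilates the principal part and forces
\[
 \Div\bE=-k^{-2}\Div\bff
\]
on every solution. Substituting this back decouples the system into the scalar Helmholtz equations
\[
 -(\Delta+k^2)\bE=\bff+k^{-2}\grad\Div\bff,
\]
whose right-hand side still has compact support. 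Part~(i), applied componentwise (it is stated for compactly supported distributions, which covers $\grad\Div\bff$), then yields $\bE=g_k*(\bff+k^{-2}\grad\Div\bff)=k^{-2}(\grad\Div+k^2)g_k*\bff$, exactly the asserted representation.

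The converse follows by reading the same computation backwards: the stated formula manifestly satisfies $\vcurl\curl\bE-k^2\bE=\bff$ after inserting $(\Delta+k^2)(g_k*\bff)=-\bff$, and each of its components is of the form $g_k*(\text{compactly supported distribution})$, so the radiation condition is guaranteed by the ``if'' part of~(i). The step I expect to require the most care is the ``only if'' direction, i.e.\ the uniqueness underlying both parts: one must check that imposing \eqref{E:Sommerfeld} on each scalar component (rather than merely the vectorial Silver--M\"uller condition) is precisely what renders the representation unique, and that passing through $\Div\bE=-k^{-2}\Div\bff$ loses no solutions. This turns out to be clean exactly because $\Div\vcurl\curl=0$ removes all freedom in the divergence, leaving a decoupled scalar problem to which the uniqueness of part~(i) applies verbatim.
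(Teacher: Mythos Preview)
Your proposal is correct and follows essentially the same route as the paper. The paper compresses the reduction of part~(ii) to part~(i) into the single operator identity
\[
  (\grad\Div+k^{2})(\vcurl\curl-k^{2}) = -k^{2}(\Delta+k^{2}),
\]
whereas you unpack this identity in two steps (first extracting $\Div\bE=-k^{-2}\Div\bff$ from $\Div\vcurl=0$, then substituting via $\vcurl\curl=\grad\Div-\Delta$); the content is identical.
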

Note that the vector part (ii) of this lemma follows from the scalar part (i) and the relation 
$$
  (\grad\Div+k^{2}) (\vcurl\curl - k^{2}) = -k^{2} (\Delta + k^{2})
$$
if one uses the fact that for a function $u$ satisfying the radiation condition also the derivatives of $u$ satisfy the radiation condition and that derivations commute with convolutions. 

Applying Lemma~\ref{L:repres} to the system \eqref{E:PDE2-E12} and using the system \eqref{E:inc} satisfied by the incoming field, we obtain
$$
  \bE-\bE^{0} = \vcurl g_{k}*m_{3} - \frac1{ik}\big(\grad\Div+k^{2}\big) g_{k}*\bj 
$$
where the right hand side is given by \eqref{E:rhs}, but with $J=0$. Eliminating $H_{3}$ from the right hand side using the equation
$$
  \curl\bE -ik\mu_{r}H_{3}=0\,,
$$
we finally obtain
$$
  \bE - \bE^{0}= \vcurl g_{k}* \big(\frac{\mu_{r}-1}{\mu_{r}}\chi_{\Omega}\curl\bE\big)
        - (\grad\Div+k^{2})g_{k}*((1-\epsilon_{r})\chi_{\Omega}\bE)\,.
$$
Thus our VIE takes the form
\begin{equation}
\label{E:VIE-TE-E12}
 \bE - A_{k}^{\eta}\bE - B_{k}^{\nu}\bE = \bE^{0}
\end{equation}
where we used the abbreviations
\begin{equation}
\label{E:eta-nu}
\eta=1-\epsilon_{r}\,,\qquad \nu=1-\tfrac1{\mu_{r}}
\end{equation}
\begin{align}
\label{E:A}
 A_{k}^{\eta}\bE &= -(\grad\Div+k^{2})g_{k}*(\eta\chi_{\Omega}\bE)\\
\label{E:B}
 B_{k}^{\nu}\bE &= \vcurl g_{k}* (\nu\chi_{\Omega}\curl\bE)\,.
\end{align}
Strictly speaking, we would not need to write the characteristic function $\chi_{\Omega}$ here, because the coefficient functions $\eta$ and $\nu$ describing the electric and magnetic contrast vanish outside of $\Omega$, but this notation allows us to treat $\eta$ and $\nu$ simply as numbers in the piecewise constant case.

The VIE \eqref{E:VIE-TE-E12} is valid on all of $\R^{2}$. However, as soon as $\bE$ is known on $\Omega$, the values of $A_{k}^{\eta}\bE$ and $B_{k}^{\nu}\bE$ are known, and then \eqref{E:VIE-TE-E12} defines $\bE$ on the entire space $\R^{2}$. Thus we will consider the VIE \eqref{E:VIE-TE-E12} as an integral equation on $\Omega$ for the unknown $\bE$  on $\Omega$, and then use the formula \eqref{E:VIE-TE-E12} to extend $\bE$ to all of $\R^{2}$.

Both integral operators $A_{k}^{\eta}$ and $B_{k}^{\nu}$ are integro-differential operators that can be considered as integral operators with strongly singular kernels, since the second derivatives of $g_{k}(x-y)$ behave like $|x-y|^{-2}$ as $x-y\to0$.

We use the standard notation for Sobolev spaces $H^{m}$ and for 
\begin{align*}
\bH(\curl,\Omega) &= \{\bE\in L^{2}(\Omega)^{2} \mid \curl \bE \in L^{2}(\Omega) \}\\
\bH(\Div,\Omega) &= \{\bE\in L^{2}(\Omega)^{2} \mid \Div \bE \in L^{2}(\Omega) \}\,.
\end{align*}

\begin{lemma}
\label{L:ABinHcurl}
Let $\eta,\nu\in L^{\infty}(\Omega)$. Then
the operators $A_{k}^{\eta}$ and $B_{k}^{\nu}$ are bounded linear operators in $\bH(\curl,\Omega)$.
\end{lemma}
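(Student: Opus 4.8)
The plan is to write each of $A_k^\eta$ and $B_k^\nu$ as a composition of three elementary steps — multiplication by a bounded coefficient, convolution with the fundamental solution $g_k$, and a single first-order differentiation — and then to exploit the two annihilation identities $\curl\grad=0$ and $\curl\vcurl=-\Delta$ to see that the strongly singular \emph{second} derivatives never actually survive when the curl of the output is computed. This is what will keep everything inside $\bH(\curl,\Omega)$ without any regularity hypothesis on $\Gamma$.

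The one genuinely analytic ingredient I would establish first is the mapping property of the volume potential $V_k\bff:=g_k*\bff$. Because $g_k$ is the fundamental solution of $-(\Delta+k^2)$ and has only a logarithmic singularity at the origin, it lies in $L^2_{\mathrm{loc}}(\R^2)$; hence for $\bff\in L^2(\R^2)$ supported in $\overline\Omega$, Young's inequality gives $V_k\bff\in L^2_{\mathrm{loc}}$, and interior elliptic regularity applied to $-(\Delta+k^2)V_k\bff=\bff$ bootstraps this to $V_k\bff\in H^2_{\mathrm{loc}}(\R^2)$. Concretely, for any bounded $\Omega'\supset\overline\Omega$ the restriction $\bff\mapsto(V_k\bff)|_{\Omega'}$ is bounded from $L^2(\Omega)$ into $H^2(\Omega')$. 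This two-derivative gain of the Newtonian potential is really the only nontrivial estimate; everything else is bookkeeping.

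For $A_k^\eta$: given $\bE\in\bH(\curl,\Omega)$, multiplication by $\eta\chi_\Omega\in L^\infty$ sends it to a compactly supported $\eta\chi_\Omega\bE\in L^2(\R^2)^2$ with norm at most $\|\eta\|_\infty\|\bE\|_{L^2}$. Setting $\bW:=V_k(\eta\chi_\Omega\bE)\in H^2$, one has $A_k^\eta\bE=-(\grad\Div+k^2)\bW\in L^2(\Omega)^2$. For the curl I use $\curl\grad=0$: since $\Div\bW\in H^1$, the distributional curl of $\grad\Div\bW$ vanishes, so
\[
  \curl A_k^\eta\bE = -k^2\,\curl\bW ,
\]
and $\curl\bW\in H^1(\Omega)\subset L^2(\Omega)$. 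Thus $A_k^\eta\bE\in\bH(\curl,\Omega)$, with every link in the chain bounded. For $B_k^\nu$: the scalar $\curl\bE\in L^2(\Omega)$, so $\nu\chi_\Omega\curl\bE\in L^2(\R^2)$ is compactly supported. Setting $w:=V_k(\nu\chi_\Omega\curl\bE)\in H^2$, we get $B_k^\nu\bE=\vcurl w\in H^1(\Omega)^2\subset L^2(\Omega)^2$, and now using $\curl\vcurl=-\Delta$ together with $-(\Delta+k^2)w=\nu\chi_\Omega\curl\bE$,
\[
  \curl B_k^\nu\bE = -\Delta w = \nu\chi_\Omega\curl\bE + k^2 w \in L^2(\Omega).
\]
Again each step is bounded, so $B_k^\nu$ is a bounded operator in $\bH(\curl,\Omega)$.

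The main obstacle is thus the volume-potential regularity in the second paragraph; the rest reduces to tracking the compositions. What is worth emphasizing — and what foreshadows Proposition~\ref{P:B0projector} — is that although $A_k^\eta$ and $B_k^\nu$ are bona fide second-order, strongly singular operators, the curl-annihilation identities force their curls to depend only on first derivatives of the $H^2$ potentials $\bW$ and $w$, so no compactness argument and no smoothness of the boundary enters the boundedness proof.
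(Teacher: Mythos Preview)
Your argument is correct and is essentially the same as the paper's: both hinge on the single analytic fact that convolution with $g_k$ maps compactly supported $L^2$ functions into $H^2_{\mathrm{loc}}(\R^2)$, and then use $\curl\grad=0$ for $A_k^\eta$ and the $H^1$-regularity of $\vcurl w$ for $B_k^\nu$. Your explicit computation of $\curl B_k^\nu\bE$ via $\curl\vcurl=-\Delta$ is a bit more than needed (since $H^1(\Omega)^2\subset\bH(\curl,\Omega)$ already does the job), but it is correct and, as you note, nicely anticipates the projection identity in Proposition~\ref{P:B0projector}.
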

\begin{proof}
The convolution with $g_{k}$ is a pseudodifferential operator of order $-2$ and therefore maps any $L^{2}$ function with compact support to a function in the Sobolev space 
$H^{2}_{\rm loc}(\R^{2})$. For a bounded Lipschitz domain $\Omega$, the restriction of this convolution to $\Omega$ therefore defines a bounded operator from $L^{2}(\Omega)$ to $H^{2}(\Omega)$. 

This implies that $A_{k}^{\eta}$ is bounded from $L^{2}(\Omega)^{2}$ to $L^{2}(\Omega)^{2}$ and $\curl A_{k}^{\eta}$ even maps $L^{2}(\Omega)^{2}$ boundedly into $H^{1}(\Omega)$. Hence $A_{k}^{\eta}$ is bounded from $L^{2}(\Omega)^{2}$ to $\bH(\curl,\Omega)$.

For $B_{k}^{\nu}$, we see that it maps $\bH(\curl,\Omega)$ boundedly to $H^{1}(\Omega)^{2}$, hence to $\bH(\curl,\Omega)$. 
\end{proof}
The main question to be answered here is under what conditions on the coefficients $\eta,\nu$ the volume integral operator
$\Id-A_{k}^{\eta}-B_{k}^{\nu}$ is Fredholm in the space $\bH(\curl,\Omega)$.
Equivalently, when is the number $1$ not a member of the essential spectrum (see \eqref{E:Sess}) of the operator $A_{k}^{\eta}+B_{k}^{\nu}$. More generally, the aim is to determine 
$\Sess(A_{k}^{\eta}+B_{k}^{\nu})$ in dependence of the coefficients $\eta$ and $\nu$.
This is the subject of Section~\ref{S:spec} below.

\subsection{The magnetic formulation: $H_{3}$ as unknown}\label{SS:H3}

We start once more from the system \eqref{E:TEcurl} and reduce it to a second order equation, this time by eliminating the electric field:
\begin{equation}
\label{E:PDE2-H3}
 -(\Delta+k^{2})H_{3}= \curl\bj -ikm_{3}\,.
\end{equation}
The application of Lemma~\ref{L:repres} leads to
$$
 H_{3}-H_{3}^{0} = g_{k}*\curl\bj -ik g_{k}*m_{3}
   = ik\curl g_{k}*(\eta\chi_{\Omega}\bE) 
     + k^{2}g_{k}*\big((\mu_{r}-1)\chi_{\Omega}H_{3}\big)\,.
$$
The remaining $\bE$ on the right hand side is eliminated using 
$$
 \vcurl H_{3} + ik\epsilon_{r}\bE = \bJ\,, 
$$
finally leading to
$$
  H_{3}-H_{3}^{0} = \curl g_{k}*(\alpha\chi_{\Omega}\vcurl H_{3})
     + k^{2}g_{k}*(\beta\chi_{\Omega}H_{3})
$$
with 
\begin{equation}
\label{E:defab}
 \alpha=1-\tfrac1{\epsilon_{r}}, \qquad  \beta=\mu_{r}-1\,.
\end{equation}
Here we used the simplifying assumption that the current density is supported in the complement of $\Omega$, so that $\chi_{\Omega}\bJ=0$. Otherwise there would appear another term on the right hand side.

We can write this VIE in the form
\begin{equation}
\label{E:VIE-TE-H3}
 H_{3} - C_{k}^{\alpha}H_{3} - D_{k}^{\beta}H_{3} = H_{3}^{0}
\end{equation}
with the integral operators
\begin{align}
\label{E:C}
 C_{k}^{\alpha}u &= \curl g_{k}*(\alpha\chi_{\Omega}\vcurl u)\\
\label{E:D}
 D_{k}^{\beta}H_{3} &= k^{2}g_{k}*(\beta\chi_{\Omega} u)\,.
\end{align}
The natural function space is now simply $H^{1}(\Omega)$, and the mapping properties of these two integral operators are easy to observe.
\begin{lemma}
\label{L:CDinH1}
Let $\alpha,\beta \in L^{\infty}(\Omega)$. Then
$C_{k}^{\alpha}$ is a bounded operator in $H^{1}(\Omega)$.
The operator
$D_{k}^{\beta}$ maps $L^{2}(\Omega)$ boundedly to $H^{2}(\Omega)$, thus it is compact in $H^{1}(\Omega)$.
\end{lemma}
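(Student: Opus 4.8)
The plan is to deduce both assertions from the single smoothing property of $g_{k}*$ already recorded in the proof of Lemma~\ref{L:ABinHcurl}: after extension by zero and restriction to the bounded Lipschitz domain $\Omega$, convolution with $g_{k}$ is a pseudodifferential operator of order $-2$ and therefore maps $L^{2}(\Omega)$ boundedly into $H^{2}(\Omega)$. With this in hand, each claim reduces to tracking Sobolev orders through the composition that defines the operator.

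For $C_{k}^{\alpha}$ I would take $u\in H^{1}(\Omega)$ and follow the chain $u\mapsto\vcurl u\mapsto\alpha\chi_{\Omega}\vcurl u\mapsto g_{k}*(\alpha\chi_{\Omega}\vcurl u)\mapsto\curl g_{k}*(\alpha\chi_{\Omega}\vcurl u)$. The first step gives $\vcurl u\in L^{2}(\Omega)^{2}$ with $\|\vcurl u\|_{L^{2}}\le\|u\|_{H^{1}}$; multiplication by $\alpha\chi_{\Omega}\in L^{\infty}$ keeps the result in $L^{2}(\Omega)^{2}$ with compact support; componentwise convolution with $g_{k}$ then lands in $H^{2}(\Omega)^{2}$; and the final scalar $\curl$ costs exactly one derivative, so the output lies in $H^{1}(\Omega)$. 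The net order balance, $+1$ from $\vcurl$, $-2$ from $g_{k}*$, and $+1$ from $\curl$, is zero, which is why $C_{k}^{\alpha}$ is bounded in $H^{1}(\Omega)$ but, because of the final differentiation, gains no smoothing and is in general not compact.

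For $D_{k}^{\beta}$ there is no differentiation after the convolution, so the two derivatives gained by $g_{k}*$ survive. For $u\in L^{2}(\Omega)$ the product $\beta\chi_{\Omega}u$ lies in $L^{2}(\Omega)$ with compact support, hence $g_{k}*(\beta\chi_{\Omega}u)\in H^{2}(\Omega)$, and the constant factor $k^{2}$ is harmless; this gives the asserted bound $D_{k}^{\beta}\colon L^{2}(\Omega)\to H^{2}(\Omega)$. Compactness in $H^{1}(\Omega)$ then follows by factoring the map as $H^{1}(\Omega)\hookrightarrow L^{2}(\Omega)\to H^{2}(\Omega)\hookrightarrow H^{1}(\Omega)$ and using that the last embedding is compact. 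The only points needing a word of justification, and the closest thing to an obstacle, are that, since $\Omega$ is merely Lipschitz, the restriction $H^{2}(\R^{2})\to H^{2}(\Omega)$ is still bounded and the Rellich embedding $H^{2}(\Omega)\hookrightarrow H^{1}(\Omega)$ is still compact; both are standard for bounded Lipschitz domains, so the lemma is ultimately pure bookkeeping built on the order $-2$ smoothing of $g_{k}*$.
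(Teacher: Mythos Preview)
Your argument is correct and is exactly the routine verification the paper has in mind: the lemma is stated there without proof, with only the remark that these mapping properties are ``easy to observe'', relying on the same order $-2$ smoothing of $g_{k}*$ already used for Lemma~\ref{L:ABinHcurl}. Your order-counting for $C_{k}^{\alpha}$ and $D_{k}^{\beta}$ and the Rellich argument for compactness fill in precisely those omitted details.
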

The question to be answered in the next section is to determine 
$\sigma_{\rm ess}(C_{k}^{\alpha}+D_{k}^{\beta})$.

\section{Analysis of the VIE in the TE case}\label{S:spec}

In order to determine the Fredholm properties of the VIEs \eqref{E:VIE-TE-E12} and \eqref{E:VIE-TE-H3}, we study the essential spectrum of the operators $A_{k}^{\eta}+B_{k}^{\nu}$ and $C_{k}^{\alpha}+D_{k}^{\beta}$ using methods developed in \cite{CoDarKo2010,CoDarSak2012,Costabel:2014arXiv1412.8685C} and some properties specific to the two-dimensional case.

As definition of the essential spectrum $\Sess(A)$ of a bounded operator in a Banach space we choose
\begin{equation}
\label{E:Sess}
  \Sess(A) = \{\lambda\in\C \mid \lambda\Id - A 
               \mbox{ is not Fredholm} \}\,.
\end{equation}
Among the many possible different definitions of the essential spectrum, this one is sometimes called the Wolf essential spectrum. It is invariant under compact perturbations. Many of the operators studied in the following, in particular in the case of piecewise constant coefficients, can be reduced to compact perturbations of selfadjoint operators in some Hilbert spaces, which implies that if they are Fredholm, they are of index zero.  

In this section, we first assume that the coefficients are piecewise constant. This means that $\alpha, \beta, \eta, \nu$ are complex numbers and the volume integral operators depend linearly on these numbers:
$$
 A_{k}^{\eta} = \eta A_{k}^{1}\,;\quad
 B_{k}^{\nu}  = \nu B_{k}^{1}\,;\quad
 C_{k}^{\alpha} = \alpha C_{k}^{1}\,;\quad
 D_{k}^{\beta} = \beta D_{k}^{1}\,.
$$

 The essential spectrum being invariant with respect to compact perturbations of the operators, we can further simplify the VIEs by replacing the fundamental solution of the Helmholtz equation by the one of the Laplace equation.
\begin{lemma}
\label{L:gk-g0}
Let $g_{0}(x)=-\frac1{2\pi}\log|x|$.
Then for any $k\in\C$ the convolution by $g_{k}-g_{0}$ is a pseudodifferential operator of order $-4$ on $\R^{2}$.
\end{lemma}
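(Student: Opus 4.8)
The plan is to compute the Fourier multiplier of the convolution by $g_k-g_0$, read off its order from the high-frequency asymptotics, and then make the argument rigorous at low frequencies by working directly with the kernel. First I would record that convolution by $g_k$ is the outgoing inverse of $-(\Delta+k^2)$, hence a Fourier multiplier with symbol $(|\xi|^2-k^2)^{-1}$, while convolution by $g_0$ is the inverse of $-\Delta$, with symbol $|\xi|^{-2}$. Subtracting,
$$
\frac{1}{|\xi|^2-k^2}-\frac{1}{|\xi|^2}=\frac{k^2}{|\xi|^2(|\xi|^2-k^2)}=\frac{k^2}{|\xi|^4}+O(|\xi|^{-6})\qquad(|\xi|\to\infty),
$$
and all $\xi$-derivatives of the left-hand side satisfy the corresponding symbol estimates of order $-4$ for large $|\xi|$. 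This is the essence of the claim: equivalently, at the level of Fourier multipliers, $g_k-g_0=k^2\,g_0*g_k$, the convolution of two kernels of order $-2$, which composes to order $-4$.

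The step I expect to be the main obstacle is reconciling this clean statement with the fact that the multiplier above is singular at $\xi=0$ (and, for real $k$, at $|\xi|=k$), so it is \emph{not} literally a symbol in the class $S^{-4}$ on all of $\R^2$. The pseudodifferential order is governed by the high-frequency behaviour, equivalently by the singularity of the kernel at the origin, and the low-frequency part only contributes a smoothing remainder; making this precise is the real content. To localise the analysis at $x=0$ and avoid the symbol singularities altogether, I would use the standard small-argument expansion of the Hankel function. Writing $g_k(x)=P_k(|x|^2)+Q_k(|x|^2)\log|x|$ with $P_k,Q_k$ entire and $Q_k(r)=-\tfrac1{2\pi}J_0(k\sqrt r)$, subtraction of $g_0=-\tfrac1{2\pi}\log|x|$ cancels the leading logarithm: the coefficient of $\log|x|$ becomes $Q_k(|x|^2)+\tfrac1{2\pi}=\tfrac{k^2}{8\pi}|x|^2+O(|x|^4)$, which vanishes to second order. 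Hence
$$
g_k(x)-g_0(x)=\big(\text{entire function of }|x|^2\big)+\frac{k^2}{8\pi}\,|x|^2\log|x|+O(|x|^4\log|x|),
$$
so near the origin the only non-smooth contribution is of type $|x|^2\log|x|$ and higher.

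Finally I would identify this singularity with order $-4$. In $\R^2$ the function $\tfrac1{8\pi}|x|^2\log|x|$ is a fundamental solution of the bi-Laplacian $\Delta^2$, whose symbol is $|\xi|^4$; thus $|x|^2\log|x|$ carries the multiplier $\sim|\xi|^{-4}$, confirming order $-4$, and the higher terms $O(|x|^4\log|x|)$ only improve the regularity. The entire part of $g_k-g_0$, together with the smooth far-field behaviour of $g_k$ (real-analytic and decaying away from the origin), contributes a kernel that is smooth off the diagonal and hence a smoothing remainder that does not affect the order. Assembling the smooth part with the $|x|^2\log|x|$ leading singularity shows that convolution by $g_k-g_0$ is a pseudodifferential operator of order $-4$ on $\R^2$, for every $k\in\C$, as claimed.
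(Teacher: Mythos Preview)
Your proof is correct and starts from exactly the same observation as the paper: the symbol computation
\[
\frac{1}{|\xi|^{2}-k^{2}}-\frac{1}{|\xi|^{2}}=\frac{k^{2}}{(|\xi|^{2}-k^{2})\,|\xi|^{2}}
\]
is in fact the entire content of the paper's proof, which simply displays this identity and cites N\'ed\'elec's book. Where you go further is in worrying about the singularities of this multiplier at $\xi=0$ (and at $|\xi|=k$ for real $k$), which prevent it from lying in $S^{-4}$ globally, and in resolving this by analysing the kernel directly: subtracting $g_{0}$ kills the leading $\log|x|$ term in the Hankel expansion, leaving a singularity of type $|x|^{2}\log|x|$, which you correctly identify with the biharmonic fundamental solution and hence with order $-4$. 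This kernel-side argument is a genuine addition; the paper is content to read off the order from the large-$|\xi|$ behaviour of the symbol and leave the low-frequency issues implicit, which is standard practice since the pseudodifferential order is a microlocal (high-frequency) notion and the singularities of the multiplier on a compact set only produce a smoothing error. Your version makes this explicit and self-contained, at the cost of some extra work; the paper's version is quicker but relies on the reader knowing that such low-frequency pathologies are harmless.
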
 
\begin{proof}
This can be found in the book \cite{NedelecLivre}, but it is seen most easily by comparing the symbols, i.e.\ the Fourier transforms:
$$
  \frac{1}{|\xi|^{2}-k^{2}} - \frac{1}{|\xi|^{2}} = \frac{k^{2}}{(|\xi|^{2}-k^{2})|\xi|^{2}}
$$
\end{proof}
In particular, the convolution by this difference  $g_{k}-g_{0}$ maps $L^{2}(\Omega)$ boundedly to $H^{4}(\Omega)$ for any bounded Lipschitz domain $\Omega$.

We now let the operators $A_{0}$, $B_{0}$, $C_{0}$, $D_{0}$ be defined like $A^{1}_{k}$, $B^{1}_{k}$, $C^{1}_{k}$, $D^{1}_{k}$, but for $k=0$. More precisely
\begin{align}
\label{E:A0}
 A_{0}\bE &= -(\grad\Div)g_{0}*(\chi_{\Omega}\bE)\\
\label{E:B0}
 B_{0}\bE &= \;\vcurl g_{0}* (\chi_{\Omega}\curl\bE)\\
\label{E:C0}
 C_{0} u &= \;\,\curl g_{0}*(\chi_{\Omega}\vcurl u)\\
\label{E:D0}
 D_{0} u &= \;0\,. 
\end{align}

\begin{corollary}
\label{C:compperturb}
The operators
$$\begin{aligned}
 A^{1}_{k}-A_{0}, B^{1}_{k}-B_{0}&: \bH(\curl,\Omega)\to \bH(\curl,\Omega) \quad\mbox{ and }\\
 C^{1}_{k} - C_{0}, D^{1}_{k} - D_{0}&: H^{1}(\Omega)\to H^{1}(\Omega)
\end{aligned}
$$   
are compact.
\end{corollary}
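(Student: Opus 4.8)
The plan is to show that each of the four differences gains enough Sobolev regularity to be compact, using two ingredients: the strong smoothing produced by the difference $g_{k}-g_{0}$ provided by Lemma~\ref{L:gk-g0}, and the compactness of Rellich-type embeddings on the bounded Lipschitz domain $\Omega$.

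First I would rewrite each difference so that the strongly singular principal part cancels. For the magnetic operators this is immediate, since the fundamental solution enters only through $g_{k}-g_{0}$: one has $B^{1}_{k}-B_{0}=\vcurl(g_{k}-g_{0})*(\chi_{\Omega}\curl\bE)$ and $C^{1}_{k}-C_{0}=\curl(g_{k}-g_{0})*(\chi_{\Omega}\vcurl u)$. Because $\curl\bE\in L^{2}(\Omega)$ for $\bE\in\bH(\curl,\Omega)$ and $\vcurl u\in L^{2}(\Omega)^{2}$ for $u\in H^{1}(\Omega)$, Lemma~\ref{L:gk-g0} shows that the convolution maps these data into $H^{4}(\Omega)$, and applying the first-order operators $\vcurl$, respectively $\curl$, lands in $H^{3}(\Omega)$. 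For the electric operator I would split $A^{1}_{k}-A_{0}=-(\grad\Div)(g_{k}-g_{0})*(\chi_{\Omega}\bE)-k^{2}g_{k}*(\chi_{\Omega}\bE)$: the first term maps $L^{2}(\Omega)^{2}$ into $H^{2}(\Omega)^{2}$ by Lemma~\ref{L:gk-g0} followed by the second-order operator $\grad\Div$, while the second term maps $L^{2}(\Omega)^{2}$ into $H^{2}(\Omega)^{2}$ because convolution by $g_{k}$ is of order $-2$. Finally $D^{1}_{k}-D_{0}=k^{2}g_{k}*(\chi_{\Omega}u)$ maps $L^{2}(\Omega)$ into $H^{2}(\Omega)$ for the same reason, as already recorded in Lemma~\ref{L:CDinH1}.

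Having established that all four differences map the relevant energy space into a Sobolev space of order at least $2$, compactness reduces to the compactness of the embeddings $H^{2}(\Omega)\hookrightarrow H^{1}(\Omega)$ and $H^{2}(\Omega)^{2}\hookrightarrow\bH(\curl,\Omega)$. The former is the classical Rellich--Kondrachov theorem, valid since a bounded Lipschitz domain is a Sobolev extension domain. For the latter I would argue directly: from a bounded sequence in $H^{2}(\Omega)^{2}$ extract a subsequence converging in $H^{1}(\Omega)^{2}$ by Rellich; then the $L^{2}$-norms converge, and since $\curl$ is continuous from $H^{1}(\Omega)^{2}$ to $L^{2}(\Omega)$, the curls converge in $L^{2}$ as well, giving convergence in the $\bH(\curl,\Omega)$-norm. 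This yields compactness of $C^{1}_{k}-C_{0}$ and $D^{1}_{k}-D_{0}$ in $H^{1}(\Omega)$ and of $A^{1}_{k}-A_{0}$ and $B^{1}_{k}-B_{0}$ in $\bH(\curl,\Omega)$.

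The computations are routine once the differences are written in the cancelled form; the only point requiring a moment's care is the compactness of $H^{2}(\Omega)^{2}\hookrightarrow\bH(\curl,\Omega)$, which is not one of the textbook Rellich embeddings but follows from Rellich in $H^{1}$ together with the continuity of $\curl$ on $H^{1}$. I do not expect any genuine obstacle beyond keeping track of which space each operator acts on.
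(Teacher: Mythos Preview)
Your argument is correct and is exactly the intended one: the paper states the corollary without proof, as an immediate consequence of Lemma~\ref{L:gk-g0}, and your proposal fills in precisely the routine details the paper leaves implicit. The splitting of $A^{1}_{k}-A_{0}$ into the $(g_{k}-g_{0})$-part and the lower-order $k^{2}g_{k}*$-part is the right move, and your remark that the compact embedding $H^{2}(\Omega)^{2}\hookrightarrow\bH(\curl,\Omega)$ factors through Rellich for $H^{2}\hookrightarrow H^{1}$ followed by the continuous inclusion $H^{1}(\Omega)^{2}\hookrightarrow\bH(\curl,\Omega)$ is clean and sufficient.
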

Our task is now reduced to finding the essential spectrum of the operators
$\eta A_{0} + \nu B_{0}$ and $\alpha C_{0} + \beta D_{0}$.

\subsection{Analysis of the electric formulation}\label{SS:E12-Anal}
We want to determine the essential spectrum of the operator $\eta A_{0} + \nu B_{0}$ in $\bH(\curl,\Omega)$ for any $\eta,\nu\in\C$. In order to reduce this problem to a better-known one, we use the technique described in \cite{CoDarKo2010,CoDarSak2012} of extension by integration by parts. This transforms the strongly singular VIE into an equivalent system coupling a weakly singular VIE with a boundary integral equation.

We use the definitions \eqref{E:A0}--\eqref{E:B0}, where the derivatives and the convolution commute as operations on functions or distributions on $\R^{2}$. The multiplication by the characteristic function $\chi_{\Omega}$ is understood as the operator of extension of a function defined on $\Omega$ by zero outside of $\Omega$. Derivatives do not commute with $\chi_{\Omega}$, but we have the formulas of integration by parts or weak form of Green's formulas
\begin{align}
\label{E:gamman}
\Div \chi_{\Omega}\bE &= \chi_{\Omega}\Div\bE - \gamma'\bn\cdot\bE 
\quad&&\mbox{ for } \bE\in \bH(\Div,\Omega)\\
\label{E:gammat}
\curl \chi_{\Omega}\bE &= \chi_{\Omega}\curl\bE - \gamma'\bn\times\bE 
\quad&&\mbox{ for } \bE\in \bH(\curl,\Omega)\,.
\end{align}
Here the embedding $\gamma':H^{-\frac12}(\Gamma)\to H^{-1}_{\mathrm{comp}}(\R^{2})$ is the adjoint of the trace operator, and 
$$
\begin{aligned}
\bE\mapsto\bn\cdot\bE&: \;\bH(\Div,\Omega) \to H^{-\frac12}(\Gamma) \quad\mbox{ and }\\
\bE\mapsto\bn\times\bE&: \;\bH(\curl,\Omega) \to H^{-\frac12}(\Gamma)
\end{aligned}
$$
are the normal, respectively tangential, boundary trace mappings. 

\subsubsection{The electric operator}
For $ \bE\in \bH(\Div,\Omega)$ we can now write
\begin{equation}
\label{E:A0onHdiv}
 A_{0}\bE = -\grad \mathcal{N}\Div\bE + \grad \SS \bn\cdot\bE\,.
\end{equation}
Here $\mathcal{N}$ is the Newton potential
$$
 \mathcal{N}u(x) = \int_{\Omega}g_{0}(x-y)\,u(y)\,dy
$$
and $\SS$ is the harmonic single layer potential
$$
  \SS  v(x) = \int_{\Gamma}g_{0}(x-y)\,v(y)\,ds(y)\,.
$$
Using the fact that the Newton potential is a right inverse of $-\Delta$ and that single layer potentials are harmonic on the complement of $\Gamma$, we see that \eqref{E:A0onHdiv} implies for any $\bE\in \bH(\Div,\Omega)$
\begin{equation}
\label{E:divA0}
 \Div A_{0}\bE = \Div\bE.
\end{equation}
We now split $L^{2}(\Omega)^{2}$ into an orthogonal sum
\begin{equation}
\label{E:splitL2}
  L^{2}(\Omega)^{2} = \grad H^{1}_{0}(\Omega) \oplus \bH(\Div0,\Omega) 
\end{equation}
where the second summand is defined by this orthogonality. Later on in Section~\ref{S:sym} we will also need the orthogonal decomposition
\begin{equation}
\label{E:H0curl0}
  L^{2}(\Omega)^{2} = \grad H^{1}(\Omega) \oplus \bH_{0}(\curl0,\Omega) \,.
\end{equation}
We observe that in \eqref{E:splitL2} both summands are invariant subspaces of $A_{0}$:
On $\varphi\in H^{1}_{0}(\Omega)$, $\grad$ commutes with $\chi_{\Omega}$, so that 
$$
\begin{aligned}
 A_{0}\grad\varphi &= -\grad\Div g_{0}*(\chi_{\Omega}\grad\varphi)\\
   &= -\grad\Div\grad g_{0}*(\chi_{\Omega}\varphi)
   = \grad\varphi\,.
\end{aligned}
$$
That is, $A_{0}$ acts as the identity on the first summand in \eqref{E:splitL2}.
According to \eqref{E:divA0}, $A_{0}$ maps the second summand into itself.

Therefore we only need to study the spectral properties of $A_{0}$ on $\bH(\Div0,\Omega)$. Taking the normal trace of \eqref{E:A0onHdiv} and using the jump relations of the single layer potential gives
\begin{equation}
\label{E:nA0}
 \bn \cdot 
 A_{0}\bE = \bn\cdot\grad \SS \bn\cdot\bE\
  = \tfrac12 \bn\cdot\bE + K' \,\bn\cdot\bE
\end{equation}
with the integral operator of the normal derivative of the single layer potential on $\Gamma$
\begin{equation}
\label{E:K'}
  K'u(x) = \int_{\Gamma}\partial_{n(x)}g_{0}(x-y) u(y)\,ds(y)\quad(x\in\Gamma)\,.
\end{equation}
The extension method then consists in considering $\bn\cdot\bE$ as an independent unknown and writing the equation for the resolvent of $A_{0}$
$$
  \lambda\bE - A_{0}\bE = \bF
$$
as the system that we obtained for $\bE\in \bH(\Div0,\Omega)$ and 
$v=\bn\cdot\bE\in H^{-\frac12}(\Gamma)$, where $w=\bn\cdot\bF$:
$$
  \begin{pmatrix}
  \lambda\bE - \grad\SS v\\
  \lambda v - (\frac12\Id +K') v
  \end{pmatrix}
  = \begin{pmatrix} \bF\\ w \end{pmatrix}
$$
Due to its triangular nature, this latter system is easy to analyze: The terms on the diagonal are $\lambda\Id$ and $\lambda\Id - (\frac12\Id +K')$ and their invertibility or Fredholmness determines the corresponding property of $A_{0}$. 

As a tool, we need the essential spectrum of the operator 
$\partial_{n}\SS = \frac12\Id +K'$ in the space $H^{-\frac12}(\Gamma)$. We write  \begin{equation}
\label{E:SK'}
 \Sigma=\Sess(\tfrac12\Id +K') \,.
\end{equation}
The following result is known \cite{CoSte_Banach1985,CoSte_JMAA1985,CoRemPos}
\begin{lemma}
\label{L:SK'}
 Let $\Gamma$ be the boundary of a bounded Lipschitz domain $\Omega\subset\R^{2}$. Then 
 $$  \frac12\Id +K' : H^{-\frac12}(\Gamma)\to H^{-\frac12}(\Gamma) $$
 is a selfadjoint contraction with respect to a certain scalar product in $H^{-\frac12}(\Gamma)$. Its essential spectrum $\Sigma$ is a compact subset of the open interval $(0,1)$. \\
 If $\Gamma$ is smooth (of class $C^{1+\alpha}$, $\alpha>0$), then $K'$ is compact, hence $\Sigma=\{\frac12\}$. \\
 If $\Gamma$ is piecewise smooth with corner 
 angles $\omega_{j}\subset(0,2\pi)$, then $\Sigma$ is an interval containing $\frac12$, 
 namely the convex hull of the numbers 
 $\frac{\omega_{j}}{2\pi}$ and  $1-\frac{\omega_{j}}{2\pi}$.
\end{lemma}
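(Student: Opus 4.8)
The operator $K'$ of \eqref{E:K'} is the adjoint of the double layer boundary integral operator for the Laplacian, the classical Neumann--Poincar\'e operator, and the plan is to recover its spectral structure from the well-developed theory of the single layer operator. First I would introduce the single layer boundary integral operator $V$, the trace on $\Gamma$ of the potential $\SS$. For a bounded Lipschitz domain in $\R^{2}$ it is symmetric and, after a harmless rescaling of $\Omega$ that removes the logarithmic-capacity degeneracy specific to two dimensions (and which leaves $K'$ unchanged, $K'$ being scale invariant), it is a positive, coercive isomorphism $H^{-\frac12}(\Gamma)\to H^{\frac12}(\Gamma)$. It therefore defines an inner product $\langle u,v\rangle_{V}=\langle Vu,v\rangle$ equivalent to the $H^{-\frac12}(\Gamma)$ norm. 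The selfadjointness then follows from the Plemelj symmetrization identity $VK'=KV$ (a consequence of the symmetry of $g_{0}$ and Green's formula), since $\langle K'u,v\rangle_{V}=\langle VK'u,v\rangle=\langle KVu,v\rangle=\langle Vu,K'v\rangle=\langle u,K'v\rangle_{V}$, so that $\tfrac12\Id+K'$ is selfadjoint for $\langle\cdot,\cdot\rangle_{V}$.

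For the contraction property and the inclusion $\Sigma\subset(0,1)$, I would use the interior and exterior Dirichlet energies of the single layer potential $u=\SS v$, which are nonnegative. Writing them via Green's formula and the jump relations $\partial_{n}^{\mathrm{int}}\SS v=(\tfrac12\Id+K')v$ and $\partial_{n}^{\mathrm{ext}}\SS v=(-\tfrac12\Id+K')v$ (consistent with \eqref{E:nA0}), one finds $\langle(\tfrac12\Id+K')v,v\rangle_{V}\ge0$ and $\langle(\tfrac12\Id-K')v,v\rangle_{V}\ge0$, whence $0\le\tfrac12\Id+K'\le\Id$ in the $V$-inner product and $\Sigma\subseteq[0,1]$. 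That the endpoints $0$ and $1$ are excluded from the \emph{essential} spectrum is the coercivity part of the Lipschitz theory: the operators $\tfrac12\Id\pm K'$ are Fredholm (this is the invertibility theory of the Neumann problem on Lipschitz domains), so neither $0$ nor $1$ lies in $\Sess(\tfrac12\Id+K')$; the reflection symmetry $\lambda\mapsto1-\lambda$ between the two operators makes the placement in the open interval symmetric. This endpoint exclusion is the delicate part for general Lipschitz boundaries carried out in \cite{CoSte_Banach1985,CoSte_JMAA1985,CoRemPos}.

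When $\Gamma$ is of class $C^{1+\alpha}$ with $\alpha>0$, the kernel $\partial_{n(x)}g_{0}(x-y)$ is only weakly singular: the normal direction annihilates the leading $|x-y|^{-1}$ behaviour, leaving a bounded, H\"older-continuous kernel governed by the curvature. Hence $K'$ is compact on $H^{-\frac12}(\Gamma)$, so $\Sess(K')=\{0\}$ and $\Sigma=\{\tfrac12\}$.

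The piecewise-smooth case is the main obstacle and the genuinely two-dimensional part of the statement. Here $K'$ fails to be compact, and its non-compactness is concentrated at the corners. The plan is to localize with a partition of unity subordinate to a cover of $\Gamma$ by arcs, each containing at most one corner; on the smooth arcs the localized pieces are compact, so by a standard gluing argument $\Sess(\tfrac12\Id+K')$ equals the union of the individual corner contributions. For a corner of opening angle $\omega$, freezing the geometry to the model infinite wedge turns $K'$ into a Mellin convolution operator on the half-line, whose symbol can be computed explicitly as a function of the Mellin covariable; its range is precisely the closed interval between $\frac{\omega}{2\pi}$ and $1-\frac{\omega}{2\pi}$ (symmetric about $\tfrac12$ and collapsing to $\{\tfrac12\}$ as $\omega\to\pi$). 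Since each such interval contains $\tfrac12$, their union over the corners is exactly the convex hull of the numbers $\frac{\omega_{j}}{2\pi}$ and $1-\frac{\omega_{j}}{2\pi}$, which is the asserted $\Sigma$. The explicit Mellin-symbol computation and the rigorous justification of the localization are the hard steps, and I would invoke \cite{CoSte_Banach1985,CoSte_JMAA1985,CoRemPos} for them rather than reproduce them here.
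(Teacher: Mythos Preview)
Your sketch is correct and follows the standard route to these results. Note, however, that the paper does not actually give a proof of this lemma: it is introduced with ``The following result is known \cite{CoSte_Banach1985,CoSte_JMAA1985,CoRemPos}'' and then simply stated. So there is no proof in the paper to compare against; your proposal is effectively a summary of the arguments contained in those references.

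A few minor remarks on your write-up. The symmetrization via the single-layer inner product and the energy identities for $\tfrac12\Id\pm K'$ are exactly the right ingredients for selfadjointness and the contraction property; the 2D caveat about logarithmic capacity that you flag is indeed the only wrinkle. Your sentence about ``the reflection symmetry $\lambda\mapsto1-\lambda$'' is a bit loose: for this lemma you only need that $\tfrac12\Id+K'$ and $\tfrac12\Id-K'$ are each Fredholm (excluding $0$ and $1$ from $\Sigma$), not any symmetry of $\Sigma$ itself. The genuine symmetry $\Sess(K')=\Sess(-K')$ is a separate and deeper statement that the paper proves later as Theorem~\ref{T:sym}, and it is not needed here. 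For the piecewise smooth case, your observation that each corner interval contains $\tfrac12$, so that the union over corners is automatically an interval equal to the stated convex hull, is the right way to pass from the local Mellin computation to the global description of $\Sigma$.
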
 
As a result of the extension method, we obtain the essential spectrum of $A_{0}$.
\begin{theorem}
\label{T:SA0}
 Let $\Omega\subset\R^{2}$ be a bounded Lipschitz domain. The essential spectrum of the operator $A_{0}$ in $L^{2}(\Omega)^{2}$ is given by
 $$
   \Sess(A_{0}) = \{0\}\cup\Sigma\cup\{1\}
 $$
 where $\Sigma$ is defined in \eqref{E:SK'}. 
 The operator $A_{0}$ acts as a bounded operator in each one of the spaces 
 $\bH(\Div,\Omega)$ and $\bH(\curl,\Omega)$. In both spaces, the essential spectrum is the same as in $L^{2}(\Omega)^{2}$.
\end{theorem}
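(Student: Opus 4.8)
\emph{Plan.} The strategy is to diagonalize $A_{0}$ against the Helmholtz decomposition and read off the three pieces $\{1\}$, $\{0\}$, $\Sigma$ separately, then to propagate the computation from $L^{2}(\Omega)^{2}$ to $\bH(\Div,\Omega)$ and $\bH(\curl,\Omega)$ using the two commutation relations $\Div A_{0}\bE=\Div\bE$ from \eqref{E:divA0} and $\curl A_{0}=0$, the latter holding because by \eqref{E:A0} the field $A_{0}\bE=-\grad(\Div g_{0}*(\chi_{\Omega}\bE))$ is a gradient. Boundedness in the two graph-norm spaces is then immediate: $A_{0}$ is bounded on $L^{2}(\Omega)^{2}$ as a matrix of Riesz-type singular integral operators, and since $\curl A_{0}\bE=0$ and $\Div A_{0}\bE=\Div\bE$ lie in $L^{2}(\Omega)$ whenever $\bE$ lies in $\bH(\curl,\Omega)$, respectively $\bH(\Div,\Omega)$, the operator maps each space boundedly into itself.

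For $L^{2}(\Omega)^{2}$ I would argue exactly as the excerpt sets up. In the orthogonal splitting \eqref{E:splitL2}, $A_{0}$ acts as $\Id$ on the infinite-dimensional summand $\grad H^{1}_{0}(\Omega)$, contributing $\{1\}$, and maps $\bH(\Div0,\Omega)$ into itself. On the latter the extension method turns $\lambda\Id-A_{0}$ into the upper-triangular system whose diagonal blocks are $\lambda\Id$ on $\bH(\Div0,\Omega)$ and $\lambda\Id-(\tfrac12\Id+K')$ on $H^{-\frac12}(\Gamma)$, and whose Fredholmness is equivalent to that of $\lambda\Id-A_{0}$. The first block is Fredholm iff $\lambda\ne0$, contributing $\{0\}$, and the second iff $\lambda\notin\Sigma$ by Lemma~\ref{L:SK'}. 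Hence $\Sess(A_{0})=\{0\}\cup\Sigma\cup\{1\}$ in $L^{2}(\Omega)^{2}$.

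The two graph-norm spaces are handled by propagating this computation. In $\bH(\Div,\Omega)$ the splitting \eqref{E:splitL2} restricts to a topological direct sum $\bH(\Div,\Omega)=\{\grad\varphi:\varphi\in H^{1}_{0}(\Omega),\ \Delta\varphi\in L^{2}(\Omega)\}\oplus\bH(\Div0,\Omega)$, with a bounded projection obtained by solving $\Delta\varphi=\Div\bE$; since the $\bH(\Div,\Omega)$-norm equals the $L^{2}$-norm on $\bH(\Div0,\Omega)$, the operator $A_{0}$ keeps the \emph{same} two blocks, and the essential spectrum is unchanged. In $\bH(\curl,\Omega)$, by contrast, $A_{0}$ does not split block-diagonally, since its range lies entirely in the curl-free fields; instead I would use the short exact sequence given by the surjection $\curl:\bH(\curl,\Omega)\to L^{2}(\Omega)$ with kernel $\bH(\curl0,\Omega)=\{\bE\in L^{2}(\Omega)^{2}:\curl\bE=0\}$. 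Because $\curl A_{0}=0$, this kernel is $A_{0}$-invariant (carrying the $L^{2}$-norm) and $A_{0}$ induces $0$ on the quotient, so $\lambda\Id-A_{0}$ induces $\lambda\Id$ there, contributing $\{0\}$. On $\bH(\curl0,\Omega)$, which modulo the finite-dimensional harmonic fields equals $\grad H^{1}_{0}(\Omega)\oplus\grad\mathcal{H}(\Omega)$ with $\mathcal{H}(\Omega)$ the harmonic $H^{1}$-functions, $A_{0}$ is the identity on $\grad H^{1}_{0}(\Omega)$, giving $\{1\}$, while on a harmonic gradient \eqref{E:A0onHdiv} gives $A_{0}\grad h=\grad\SS\partial_{n}h$, which read through the Neumann trace $\grad h\mapsto\partial_{n}h$ becomes $\tfrac12\Id+K'$ by \eqref{E:nA0}, with essential spectrum $\Sigma$.

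I expect the $\bH(\curl,\Omega)$ case to be the main obstacle, for two reasons. First, in the identification on harmonic gradients the space $\grad\mathcal{H}(\Omega)$ carries the $L^{2}(\Omega)$-norm, which corresponds under $\grad h\mapsto\partial_{n}h$ precisely to the energy inner product on $H^{-\frac12}(\Gamma)$ for which $\tfrac12\Id+K'$ is the selfadjoint contraction of Lemma~\ref{L:SK'}; one must verify this match to conclude that $\Sess(A_{0}|_{\grad\mathcal{H}})$ is $\Sigma$ itself rather than some conjugate set. Second, the transfer of the Fredholm property across the invariant subspace $\bH(\curl0,\Omega)$ is not automatic: the nontrivial direction, namely that Fredholmness of $\lambda\Id-A_{0}$ on $\bH(\curl,\Omega)$ forces it on $\bH(\curl0,\Omega)$, requires the induced quotient operator $\lambda\Id$ to be Fredholm as well, which is exactly the exact-sequence (snake lemma) bookkeeping and the reason $\lambda=0$ must be excluded separately. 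Once these two points are settled, all three spaces yield $\{0\}\cup\Sigma\cup\{1\}$.
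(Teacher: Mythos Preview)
Your treatment of $L^{2}(\Omega)^{2}$ and of $\bH(\Div,\Omega)$ is essentially the paper's: the triangular extended system you write down is exactly what the text sets up, and the paper's proof simply packages the passage from the extended system back to $\lambda\Id-A_{0}$ via the ``recombination lemma'' (for $S v=\grad\SS v$ and $T\bE=\bn\cdot\bE$, the products $ST=A_{0}|_{\bH(\Div0)}$ and $TS=\tfrac12\Id+K'$ are spectrally equivalent outside $0$). Your identification of $\{0\}$ from the infinite-dimensional kernel and $\{1\}$ from $\grad H^{1}_{0}$ matches the paper.

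Where you diverge is $\bH(\curl,\Omega)$. Your route through the short exact sequence $0\to\bH(\curl0,\Omega)\to\bH(\curl,\Omega)\xrightarrow{\curl}L^{2}(\Omega)\to0$, the splitting of $\bH(\curl0,\Omega)$ into $\grad H^{1}_{0}$ and harmonic gradients, and the re-identification of $A_{0}$ on $\grad\mathcal{H}(\Omega)$ with $\tfrac12\Id+K'$ is correct in outline, and the two obstacles you flag (norm compatibility of the Neumann-trace identification, and the exact-sequence bookkeeping for transferring Fredholmness) are real and need to be settled. The paper, however, avoids all of this with a one-line observation: the \emph{same} factorization $A_{0}=ST$ already used for $L^{2}$ works verbatim in $\bH(\curl,\Omega)$. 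Indeed, the orthogonal decomposition \eqref{E:splitL2} restricts to $\bH(\curl,\Omega)=\grad H^{1}_{0}(\Omega)\oplus\bX$ with $\bX=\bH(\curl,\Omega)\cap\bH(\Div0,\Omega)$ (this is \eqref{E:splitLHcurl}), and since $\SS v$ is harmonic in $\Omega$, the map $S=\grad\SS$ actually lands in $\bX$. Thus $T:\bX\to H^{-\frac12}(\Gamma)$ and $S:H^{-\frac12}(\Gamma)\to\bX$ give the identical recombined operator $TS=\tfrac12\Id+K'$, and the essential spectrum on $\bX$ is again $\{0\}\cup\Sigma$ (the $0$ coming from the infinite-dimensional space $\bH_{0}(\Div0,\Omega)\cap\bH(\curl,\Omega)$ on which $A_{0}$ vanishes). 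This buys you the $\bH(\curl)$ case for free, with no harmonic-gradient analysis and no snake-lemma accounting; your approach would also get there, but at considerably more cost.
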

\begin{proof}
A quick way to prove that the extension method described above does indeed lead to a characterization of the essential spectrum of $A_{0}$ is to use the well-known ``recombination lemma'', namely that for two linear operators
$$
  S: Y\to X \mbox{ and } T: X\to Y
$$ 
the two products $ST$ and $TS$ are spectrally equivalent outside zero. This means that for $\lambda\ne0$, the two operators $\lambda\Id_{X}-ST$ and $\lambda\Id_{Y}-TS$ are simultaneously Fredholm or not and have the same kernel and cokernel dimensions. For a proof, see \cite[Lemma 4.1]{CoDarSak2012} or \cite[p. 38]{Gohberg1990}.

In our case, we factorize $A_{0}$ in $\bH(\Div0,\Omega)$ as $A_{0}=ST$ with
\begin{align*}
 S&: H^{-\frac12}(\Gamma) \to \bH(\Div0,\Omega)\,;\quad
 S v =  \grad\SS v\,,\\
 T&: \bH(\Div0,\Omega) \to H^{-\frac12}(\Gamma) \,;\quad
 T\bE = \bn\cdot\bE \,.
\end{align*}
Thus $A_{0}$ is spectrally equivalent outside zero to 
$$
 TS= \bn\cdot\grad \SS =\frac12\Id +K'\;\mbox{ on } H^{-\frac12}(\Gamma)\,.
$$
That $\lambda=0$ is in the essential spectrum follows from \eqref{E:A0onHdiv} directly:
$A_{0}=0$ on the space
$$
  \bH_{0}(\Div0,\Omega) = \{\bE\in\bH(\Div,\Omega) \mid \Div\bE=0, \bn\cdot\bE=0\}\,.
$$ 
Thus on $\bH(\Div0,\Omega)$, the essential spectrum is $\{0\}\cup\Sigma$, and together with the decomposition \eqref{E:splitL2} and $A_{0}\vert_{\grad H^{1}_{0}(\Omega)}=\Id$, we obtain $\Sess(A_{0}) = \{0\}\cup\Sigma\cup\{1\}$.
The essential spectrum is the same in $\bH(\Div,\Omega)$ and in $\bH(\curl,\Omega)$, because both spaces are invariant under $A_{0}$, and the operator $S$ defined above maps $H^{-\frac12}(\Gamma)$ in fact into $\bH(\curl,\Omega)\cap\bH(\Div0,\Omega)$.

\end{proof}

\subsubsection{The magnetic operator}\label{SSS:mag}
We can proceed along the same lines for the operator $B_{0}$ defined in \eqref{E:B0}. Using the integration by parts formula \eqref{E:gammat}, we get for $ \bE\in \bH(\curl,\Omega)$
\begin{equation*}
\begin{split}
  B_{0}\bE &= \vcurl \curl g_{0}* (\chi_{\Omega}\bE) + \vcurl \SS \bn\times\bE\\
    &=\bE - A_{0}\bE + \vcurl \SS \bn\times\bE
\end{split}
\end{equation*}
If $\bE\in \bH(\curl,\Omega)\cap \bH(\Div,\Omega)$, we can further reduce this using \eqref{E:A0onHdiv} to
\begin{equation}
\label{E:B0onHcurl}
  B_{0}\bE =\bE +\grad \mathcal{N}\Div\bE - \grad \SS \bn\cdot\bE + \vcurl \SS \bn\times\bE\,.
\end{equation}
If we want to use the extension technique like we did for $A_{0}$, we can reduce $B_{0}$ to an equivalent boundary integral equation, but we now need both the normal and tangential traces on the boundary. For this we use the orthogonal sum
\begin{equation}
\label{E:splitLHcurl}
  \bH(\curl,\Omega) = \grad H^{1}_{0}(\Omega) \oplus \bX \quad\mbox{ with } 
  \bX=\bH(\curl,\Omega) \cap \bH(\Div0,\Omega) \,.
\end{equation}
Note that this decomposition is orthogonal both with respect to the $L^{2}$ scalar product and the $\bH(\curl)$ scalar product. 

On $\grad H^{1}_{0}(\Omega)$ we have $B_{0}=0$ by definition. On $\bX$, we use \eqref{E:B0onHcurl} and the recombination lemma and factorize $\Id-B_{0}=ST$, where $S$ and $T$ are now defined as
\begin{align*}
 S&: H^{-\frac12}(\Gamma)^{2} \to \bX\,;\quad
 S \binom{v}{w} =  \grad\SS v-\vcurl\SS w\,,\\
 T&: \bX \to H^{-\frac12}(\Gamma)^{2} \,;\quad
 T\bE = \binom{\bn\cdot\bE}{\bn\times\bE} \,.
\end{align*}
The recombined operator $TS$ is a $2\times2$ matrix of boundary integral operators involving the normal and tangential derivatives of the single layer potential:
$$
 \tfrac12\Id + K' = \bn\cdot\grad\SS  = -\bn\times\vcurl\SS 
$$
and
$$
  \partial_{t}\SS  = \bn\times\grad\SS  = \bn\cdot\vcurl\SS 
$$
Thus $\Id-B_{0}$ on $\bX$ will be spectrally equivalent outside zero to the matrix operator on  $H^{-\frac12}(\Gamma)^{2}$
\begin{equation}
\label{E:TSforB}
 TS=\begin{pmatrix}
     \frac12\Id+K' & -\partial_{t}\SS \\
     \partial_{t}\SS & \frac12\Id+K'
    \end{pmatrix}
\end{equation}
Even for a smooth boundary $\Gamma$, the essential spectrum of this matrix of operators is not obvious, because whereas $K'$ is then compact, the strongly singular integral operator $\partial_{t}\SS$ is not compact. But for smooth $\Gamma$ we can use symbols of pseudodifferential operators to find the essential spectrum of the operator matrix, and for piecewise smooth $\Gamma$ we can use Mellin transformation to the same end, but we have not seen an obvious way to find the essential spectrum of $TS$ for a general bounded Lipschitz boundary $\Gamma$. 

We will instead turn around the equivalence between $\Id-B_{0}$ and $TS$ by finding a different way to analyze $B_{0}$ and thence deduce, as a corollary, the spectrum of $TS$ in \eqref{E:TSforB}.

We go back to the definition \eqref{E:B0} of $B_{0}$ and use $\curl\vcurl=-\Delta$ to obtain $\curl B_{0} \bE = \chi_{\Omega}\curl\bE$.
On the space $\bH(\curl,\Omega)$, we have therefore 
\begin{equation}
\label{E:curlB0}
 \curl B_{0} \bE = \curl\bE\,.
\end{equation}
This implies
$$
  B_{0}\bE = \vcurl g_{0}* (\chi_{\Omega}\curl\bE)
            = \vcurl g_{0}* (\chi_{\Omega}\curl B_{0}\bE)
            = B_{0}^{2}\bE\,.
$$
\begin{proposition}
\label{P:B0projector}
 The operator $B_{0}$ on $\bH(\curl,\Omega)$ is a projection operator.
\end{proposition}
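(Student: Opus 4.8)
The plan is to verify that $B_{0}$ is idempotent, i.e.\ that $B_{0}^{2}=B_{0}$ on $\bH(\curl,\Omega)$, since this is exactly what it means for a bounded linear operator to be a projection (I make no claim of orthogonality at this stage). By Lemma~\ref{L:ABinHcurl} the operator $B_{0}$ maps $\bH(\curl,\Omega)$ into itself, so the composition $B_{0}^{2}$ is well defined and it suffices to compare the two operators.

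The key ingredient is the identity \eqref{E:curlB0}, namely $\curl B_{0}\bE=\curl\bE$ on $\bH(\curl,\Omega)$. This rests on the fact that $g_{0}$ is a fundamental solution of $-\Delta$ together with the relation $\curl\vcurl=-\Delta$: applying $\curl$ to the definition \eqref{E:B0} gives $\curl B_{0}\bE=-\Delta g_{0}*(\chi_{\Omega}\curl\bE)=\chi_{\Omega}\curl\bE$ as a distribution on $\R^{2}$, whose restriction to $\Omega$ is $\curl\bE$. I would then substitute this back into the defining formula \eqref{E:B0}:
$$
  B_{0}\bE = \vcurl g_{0}*(\chi_{\Omega}\curl\bE)
           = \vcurl g_{0}*(\chi_{\Omega}\curl B_{0}\bE)
           = B_{0}^{2}\bE\,,
$$
where the middle equality is precisely the content of \eqref{E:curlB0}, applied inside the $\chi_{\Omega}$-extension before convolving. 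Hence $B_{0}=B_{0}^{2}$ and $B_{0}$ is a projection.

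I do not expect a genuine obstacle here, as the real work has already been done in establishing \eqref{E:curlB0}. The only point requiring care is the distinction between the distributional identity $\curl B_{0}\bE=\chi_{\Omega}\curl\bE$ valid on all of $\R^{2}$ and its restriction to $\Omega$, since $B_{0}$ is regarded as an operator acting on functions defined on $\Omega$. Because one extends by zero via $\chi_{\Omega}$ before taking the convolution, the quantities $\chi_{\Omega}\curl B_{0}\bE$ and $\chi_{\Omega}\curl\bE$ coincide, so the two viewpoints are consistent and the idempotency computation goes through without difficulty.
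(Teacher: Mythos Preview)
Your argument is correct and essentially identical to the paper's: both derive $\curl B_{0}\bE=\curl\bE$ on $\Omega$ from $\curl\vcurl=-\Delta$ and the fundamental solution property of $g_{0}$, then substitute this into the definition \eqref{E:B0} to obtain $B_{0}=B_{0}^{2}$. Your added remark about the consistency between the distributional identity on $\R^{2}$ and the restriction to $\Omega$ is a welcome clarification, but the underlying approach is the same.
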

Both the kernel of $B_{0}$, which is $\bH(\curl0,\Omega)$, and the image of $B_{0}$, which is the orthogonal complement of $\bH(\curl0,\Omega)$, hence contains 
$\vcurl H^{1}_{0}(\Omega)\cap\bH(\curl,\Omega)$, are of infinite dimension. 
Therefore the essential spectrum is equal to the spectrum:
$$
  \Sess(B_{0}) = \{0,1\}\,.
$$
As a corollary, we get the same essential spectrum for the matrix operator $TS$ in \eqref{E:TSforB}, now valid for any bounded Lipschitz $\Gamma$. But by a careful inspection of the recombination argument we can do more: We have $B_{0}S=0$ and therefore
$$
  (TS)^{2} = T(ST)S = T(\Id-B_{0})S = TS\,.
$$
\begin{corollary}
\label{C:TSprojector}
 For any bounded Lipschitz domain $\Omega\subset\R^{2}$ with boundary $\Gamma$, the matrix $TS$ of boundary integral operators in \eqref{E:TSforB} is a projection in $H^{-\frac12}(\Gamma)^{2}$.
\end{corollary}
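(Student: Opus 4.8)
The goal is simply to show that $TS$ is idempotent, since by definition a projection is an operator $P$ with $P^{2}=P$ and no further property (such as orthogonality) is claimed here. The plan is therefore to establish the single identity $(TS)^{2}=TS$, which I would obtain purely algebraically from the factorization already set up for the magnetic operator. Writing $(TS)(TS)=T(ST)S$ and using the recombination identity $ST=\Id-B_{0}$ valid on $\bX$, this becomes $T(\Id-B_{0})S=TS-TB_{0}S$. Hence everything reduces to the claim $B_{0}S=0$.

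The crux is thus to prove $B_{0}S=0$, i.e.\ that the image of $S$ lies in the kernel of $B_{0}$, which by Proposition~\ref{P:B0projector} and the remark following it is exactly $\bH(\curl0,\Omega)$. I would verify this by computing the scalar curl of a generic element $S\binom{v}{w}=\grad\SS v-\vcurl\SS w$. The gradient term is annihilated because $\curl\grad=0$, and for the second term I would use $\curl\vcurl=-\Delta$ together with the fact that the single layer potential $\SS w$ is harmonic in $\Omega$ (single layer potentials are harmonic off $\Gamma$), so that $\curl\vcurl\SS w=-\Delta\SS w=0$ in $\Omega$. Therefore $\curl S\binom{v}{w}=0$ in $\Omega$, which places $S\binom{v}{w}$ in $\bH(\curl0,\Omega)=\ker B_{0}$ and gives $B_{0}S=0$, hence $TB_{0}S=0$.

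Combining the two steps yields $(TS)^{2}=TS$, so $TS$ is a projection in $H^{-\frac12}(\Gamma)^{2}$, and the argument is valid for an arbitrary bounded Lipschitz boundary $\Gamma$ since nothing beyond Proposition~\ref{P:B0projector} (which holds in that generality) and the harmonicity of $\SS w$ has been used. I expect the only real obstacle to be the verification $B_{0}S=0$: once the harmonicity of the single layer potential is invoked this is immediate, but it is the one place where the specific structure of $S$ and the identification $\ker B_{0}=\bH(\curl0,\Omega)$ genuinely enter. The remainder is formal manipulation of the recombination identity.
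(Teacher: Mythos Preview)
Your proof is correct and follows exactly the paper's approach: the paper also derives $(TS)^{2}=T(ST)S=T(\Id-B_{0})S=TS$ from the single observation $B_{0}S=0$. You have additionally spelled out why $B_{0}S=0$ holds (via $\curl\grad=0$ and harmonicity of $\SS w$ in $\Omega$), which the paper leaves implicit.
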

This result is equivalent to the following two interesting relations for the tangential derivative of the single layer potential (Hilbert transform) on a Lipschitz boundary:
\begin{equation}
\label{E:TSrelations}
 (\partial_{t}\SS)^{2} = \tfrac14\Id-(K')^{2} \quad\mbox{ and }\quad
 \partial_{t}\SS\, K' + K'\,\partial_{t}\SS = 0 \,.
\end{equation}

\subsubsection{The electromagnetic operator}
We combine now the analysis for the operators $A_{0}$ and $B_{0}$ and find the essential spectrum of $\eta A_{0}+ \nu B_{0}$. As before, we use the orthogonal splitting \eqref{E:splitLHcurl} into electrostatic fields and electrodynamic waves. 
On the first summand $\grad H^{1}_{0}(\Omega)$, we have
$\eta A_{0}+ \nu B_{0}=\eta\Id$. Hence $\eta\in\Sess(\eta A_{0}+ \nu B_{0})$, and we only need to consider the operator in the second summand $\bX$. We use the extension method, and after seeing how this worked for the operator $B_{0}$, we do not use the extension by the normal and tangential traces on the boundary, but by $\curl\bE$ in $\Omega$ and $\bn\cdot\bE$ on $\Gamma$. Thus we write for $\bE\in\bX$
\begin{equation}
\label{E:eA+nB}
 \eta A_{0} + \nu B_{0} = \eta \grad \SS \bn\cdot\bE 
                        + \nu \vcurl g_{0}* (\chi_{\Omega}\curl\bE)
\end{equation}
We see that we can factorize $\eta A_{0} + \nu B_{0} =ST$, where $S$ and $T$ are now defined as
\begin{align*}
 S&: H^{-\frac12}(\Gamma) \times L^{2}(\Omega) \to \bX\,;\quad
 S \binom{v}{F} = \eta \grad\SS v + \nu \vcurl\mathcal{N}F \,,\\
 T&: \bX \to H^{-\frac12}(\Gamma) \times L^{2}(\Omega)\,;\quad
 T\bE = \binom{\bn\cdot\bE}{\curl\bE} \,.
\end{align*}
The recombined operator $TS$ now acts on $H^{-\frac12}(\Gamma) \times L^{2}(\Omega)$ and is given by
$$
 TS\binom{v}{F} = 
   \binom{\eta \bn\cdot\grad\SS v + \nu \bn\cdot\vcurl \mathcal{N}F}{\nu\curl\vcurl\mathcal{N}F} =
   \binom{\eta(\tfrac12\Id+K')v + \nu\bn\cdot\vcurl\mathcal{N}F}{\nu F}
$$
The essential spectrum of the triangular matrix
$$
  TS = \begin{pmatrix}
         \eta(\tfrac12\Id+K') & \nu \bn\cdot\vcurl \mathcal{N}\\
         0 & \nu\Id
       \end{pmatrix}
$$
is obvious: $\Sess(TS)=\eta\Sigma\cup\{\nu\}$. To this we might have to add the point $\{0\}$. 

Let us show that $0$ is not in the essential spectrum if $\eta\ne0$ and $\nu\ne0$. 
We know that $0$ is then not in the essential spectrum of $TS$ because $0\not\in\Sigma$. Thus $TS$ is a Fredholm operator. One can see that $T$ is also a Fredholm operator, because the boundary value problem
$$
\begin{aligned}
 \Div\bE&=0 \;\mbox{ and }\curl\bE = F\; \mbox{ in }\Omega\\
 \bn\cdot\bE&=v \;\mbox{ on }\Gamma
\end{aligned}
$$
has, for any $F\in L^{2}(\Omega)$, $v\in H^{-\frac12}(\Gamma)$, a solution $\bE\in\bX$, as soon as $\int_{\Gamma_{j}}v\,ds=0$ for all connected components $\Gamma_{j}$ of $\Gamma$. The solution is unique up to a finite-dimensional kernel. For a detailed proof of the corresponding result in $\R^{3}$, see \cite[Prop. 3.14]{ABDG}. 
Now if $TS$ and $T$ are Fredholm, then $S$ is Fredholm, too. Hence $ST$ is a Fredholm operator, and $0\not\in\Sess(ST)$.

We have proved the main result of this section
\begin{theorem}
\label{T:SessA0+B0}
Let $\Omega\subset\R^{2}$ be a bounded Lipschitz domain. For any $\eta,\nu\in\C$, the essential spectrum of the operator $\eta A_{0} + \nu B_{0}$ in the space $\bH(\curl,\Omega)$ is given by
$$
  \Sess(\eta A_{0} + \nu B_{0}) = \{\eta\} \cup \eta\Sigma \cup\{\nu\}\,.
$$
\end{theorem}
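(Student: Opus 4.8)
The plan is to use the orthogonal decomposition \eqref{E:splitLHcurl}, $\bH(\curl,\Omega)=\grad H^{1}_{0}(\Omega)\oplus\bX$ with $\bX=\bH(\curl,\Omega)\cap\bH(\Div0,\Omega)$, both summands being invariant under $\eta A_{0}+\nu B_{0}$, and to treat them separately. On the first summand I would argue directly: $A_{0}$ acts as the identity on $\grad H^{1}_{0}(\Omega)$ (as computed before Theorem~\ref{T:SA0}), while $B_{0}=0$ there, since $\grad H^{1}_{0}(\Omega)\subset\bH(\curl0,\Omega)=\ker B_{0}$. Hence $\eta A_{0}+\nu B_{0}=\eta\Id$ on this infinite-dimensional subspace, which already forces $\eta\in\Sess(\eta A_{0}+\nu B_{0})$ and supplies the $\{\eta\}$ term.

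The core of the argument lives on $\bX$, where I would apply the extension-and-recombination technique. Using $A_{0}\bE=\grad\SS\,\bn\cdot\bE$ on $\bX$ (from \eqref{E:A0onHdiv} with $\Div\bE=0$) together with the definition of $B_{0}$, I factor $\eta A_{0}+\nu B_{0}=ST$, where $T\bE=(\bn\cdot\bE,\curl\bE)$ promotes the boundary normal trace and the interior curl to independent data and $S$ reconstructs a field from a single-layer potential and a $\vcurl$ of the Newton potential. The payoff is that the recombined operator $TS$ on $H^{-\frac12}(\Gamma)\times L^{2}(\Omega)$ is block-triangular: because $\curl\vcurl\mathcal{N}F=F$ in $\Omega$, its lower-right block equals $\nu\Id$ and its lower-left block vanishes, leaving diagonal blocks $\eta(\tfrac12\Id+K')$ and $\nu\Id$. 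By the recombination lemma (spectral equivalence of $ST$ and $TS$ away from zero, \cite[Lemma~4.1]{CoDarSak2012}) and Lemma~\ref{L:SK'}, the essential spectrum of this triangular matrix is $\eta\Sigma\cup\{\nu\}$, and this determines $\Sess$ of $\eta A_{0}+\nu B_{0}$ on $\bX$ at every point except possibly $0$.

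The main obstacle is precisely the status of $\lambda=0$, which the recombination lemma cannot decide on its own. I would resolve it in the generic case $\eta\neq0$, $\nu\neq0$ by a three-factor Fredholm argument, whose genuinely non-formal ingredient is the solvability of a div-curl system. Since $0\notin\Sigma$, neither diagonal block of $TS$ degenerates at $0$, so $TS$ is Fredholm there. Independently, $T$ is Fredholm because the problem $\Div\bE=0$, $\curl\bE=F$ in $\Omega$, $\bn\cdot\bE=v$ on $\Gamma$ is solvable in $\bX$ with finite-dimensional kernel whenever the data satisfy the obvious flux compatibility (the $\R^{3}$ analogue being \cite[Prop.~3.14]{ABDG}). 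From $TS$ and $T$ Fredholm one deduces that $S$, and hence $ST=\eta A_{0}+\nu B_{0}$, is Fredholm at $0$, so $0\notin\Sess$ on $\bX$.

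Finally I would reassemble the pieces. For $\eta\neq0$, $\nu\neq0$ the essential spectrum on $\bX$ is exactly $\eta\Sigma\cup\{\nu\}$, and adding the $\{\eta\}$ contribution from $\grad H^{1}_{0}(\Omega)$ yields the claimed $\{\eta\}\cup\eta\Sigma\cup\{\nu\}$. The degenerate cases require no extra effort: when $\eta=0$ or $\nu=0$ the value $0$ already appears on the right-hand side, and consistency is checked directly from $\nu B_{0}$ being a projection on $\bX$ (Proposition~\ref{P:B0projector}) or from $\Sess(A_{0})$ restricted to $\bX$ equalling $\{0\}\cup\Sigma$ (Theorem~\ref{T:SA0}).
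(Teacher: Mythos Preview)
Your proposal is correct and follows essentially the same route as the paper's own proof: the same orthogonal splitting \eqref{E:splitLHcurl}, the same factorization $\eta A_{0}+\nu B_{0}=ST$ on $\bX$ with $T\bE=(\bn\cdot\bE,\curl\bE)$, the same triangular $TS$ yielding $\eta\Sigma\cup\{\nu\}$ via the recombination lemma, and the same three-factor Fredholm argument (using the div-curl boundary value problem to see that $T$ is Fredholm) to exclude $0$ when $\eta,\nu\ne0$. Your explicit treatment of the degenerate cases $\eta=0$ or $\nu=0$ is a small addition the paper leaves implicit, and it is handled correctly.
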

\begin{corollary}
\label{C:A+BFredholm}
 Let $\Omega\subset\R^{2}$ be a bounded Lipschitz domain. Let the coefficients $\epsilon_{r}$ and $\mu_{r}$ be complex constants. Then the integral operator of the VIE \eqref{E:VIE-TE-E12} is a Fredholm operator in $\bH(\curl,\Omega)$ if and only if 
 $$
  \epsilon_{r}\ne0\,,\quad \mu_{r}\ne0\quad\mbox{ and }\quad \epsilon_{r}\ne1-\frac1\sigma \mbox{ for all }\sigma\in\Sigma\,.
$$
If the boundary $\Gamma$ is smooth, then the condition is
$$
  \epsilon_{r}\not\in\{0,-1\}\quad\mbox{ and }\quad  \mu_{r}\ne0\,.
$$
\end{corollary}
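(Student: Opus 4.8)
The plan is to read the corollary off Theorem~\ref{T:SessA0+B0}, since that theorem already contains all the spectral content; what remains is a compact-perturbation reduction from general wave number $k$ to the case $k=0$, followed by a translation of the condition $1\notin\Sess(A_{k}^{\eta}+B_{k}^{\nu})$ into conditions on $\epsilon_{r}$ and $\mu_{r}$. First I would exploit the linearity in the piecewise-constant case to write $A_{k}^{\eta}=\eta A_{k}^{1}$ and $B_{k}^{\nu}=\nu B_{k}^{1}$, so that
$$
 (A_{k}^{\eta}+B_{k}^{\nu})-(\eta A_{0}+\nu B_{0})
 = \eta(A_{k}^{1}-A_{0})+\nu(B_{k}^{1}-B_{0})
$$
is compact on $\bH(\curl,\Omega)$ by Corollary~\ref{C:compperturb}. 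Since the essential spectrum \eqref{E:Sess} is invariant under compact perturbations, Theorem~\ref{T:SessA0+B0} then yields
$$
 \Sess(A_{k}^{\eta}+B_{k}^{\nu}) = \{\eta\}\cup\eta\Sigma\cup\{\nu\}\,.
$$
By the definition of the essential spectrum, the VIE operator $\Id-A_{k}^{\eta}-B_{k}^{\nu}$ is Fredholm in $\bH(\curl,\Omega)$ if and only if $1$ does not belong to this set, i.e. if and only if the three conditions $1\neq\eta$, $1\notin\eta\Sigma$, and $1\neq\nu$ all hold simultaneously.

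Next I would translate these three exclusions through the abbreviations \eqref{E:eta-nu}, namely $\eta=1-\epsilon_{r}$ and $\nu=1-\tfrac1{\mu_{r}}$. The condition $1\neq\eta$ is plainly equivalent to $\epsilon_{r}\neq0$. For the middle condition, recall from Lemma~\ref{L:SK'} that $\Sigma$ is a compact subset of the open interval $(0,1)$, so every $\sigma\in\Sigma$ satisfies $\sigma\neq0$; dividing by $\sigma$, the equation $\eta\sigma=1$ is equivalent to $1-\epsilon_{r}=\tfrac1\sigma$, that is $\epsilon_{r}=1-\tfrac1\sigma$. Hence $1\notin\eta\Sigma$ is equivalent to $\epsilon_{r}\neq1-\tfrac1\sigma$ for all $\sigma\in\Sigma$, exactly the condition stated. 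Finally $1\neq\nu$ reads $\tfrac1{\mu_{r}}\neq0$.

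The one point demanding care, and the only thing that could be called an obstacle, is the bookkeeping around the magnetic coefficient $\nu=1-\tfrac1{\mu_{r}}$: one must observe that for $\nu$ to be a well-defined finite number at all one needs $\mu_{r}\neq0$, and conversely $\mu_{r}\neq0$ already forces $\tfrac1{\mu_{r}}\neq0$ and hence $\nu\neq1$, so the two requirements coincide and the magnetic condition collapses to the single clean statement $\mu_{r}\neq0$. This reflects Proposition~\ref{P:B0projector}, according to which the magnetic contrast contributes only the isolated point $\nu$ to the essential spectrum. For the smooth case I would simply invoke the last part of Lemma~\ref{L:SK'}: when $\Gamma$ is of class $C^{1+\alpha}$ the operator $K'$ is compact, so $\Sigma=\{\tfrac12\}$, and the condition $\epsilon_{r}\neq1-\tfrac1\sigma$ reduces to the single exclusion $\epsilon_{r}\neq1-2=-1$; combined with $\epsilon_{r}\neq0$ and $\mu_{r}\neq0$ this gives precisely $\epsilon_{r}\notin\{0,-1\}$ and $\mu_{r}\neq0$.
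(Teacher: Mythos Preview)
Your proof is correct and follows essentially the same approach as the paper's: both reduce to Theorem~\ref{T:SessA0+B0} via the compact-perturbation argument of Corollary~\ref{C:compperturb}, then translate the condition $1\notin\{\eta\}\cup\eta\Sigma\cup\{\nu\}$ through the definitions \eqref{E:eta-nu}, invoking $\Sigma=\{\tfrac12\}$ in the smooth case. Your version is simply more explicit, in particular in spelling out why the magnetic condition collapses to $\mu_{r}\ne0$.
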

\begin{proof}
 The operator $\Id-A^{\eta}_{k}-B^{\nu}_{k}$ is Fredholm if and only if 
 $1\not\in\Sess(A^{\eta}_{k}-B^{\nu}_{k})=\Sess(\eta A_{0}+\nu B_{0})$.
 According to the theorem, this is equivalent to the conditions
$$
  1\ne\eta, \; 1\ne\nu,\; 1\ne\eta\sigma\;\mbox{ for all }\sigma\in\Sigma\,.
$$
With the definitions \eqref{E:eta-nu}, this is easily seen to be equivalent to the stated conditions on $\epsilon_{r}$ and $\mu_{r}$. For smooth $\Gamma$, we use the fact that $\Sigma=\{\frac12\}$.
\end{proof}

\subsection{Analysis of the magnetic formulation}\label{SS:H3-Anal}
In this paragraph, we study Fredholm properties of the VIE \eqref{E:VIE-TE-H3} for the magnetic field $H_{3}$ in $H^{1}(\Omega)$. Since the magnetic formulation describes the same TE transmission problem \eqref{E:TE} as the electric formulation studied in the previous paragraph, we expect equivalent results to those given in Theorem~\ref{T:SessA0+B0} and Corollary~\ref{C:A+BFredholm}. 

Whereas previously we had a two-dimensional Maxwell system that in some respects closely resembles the full three-dimensional Maxwell system, we have now, however, a scalar Helmholtz problem that is specific to the two-dimensional situation. For the analysis of the scalar VIE we use similar techniques as before, namely the partial integration and extension method, but we can take advantage of the simpler structure and allow more general coefficients. In the following, we assume 
\begin{equation}
\label{E:ab}
 \alpha\in C^{1}(\overline\Omega)\,;\quad \beta\in L^{\infty}(\Omega)\,.
\end{equation} 
Note that according to the definition \eqref{E:defab} of $\alpha$, this already includes the condition $\epsilon_{r}\ne0$ on $\overline\Omega$.
Recall that in the absence of further specification, we assume that $\Omega\subset\R^{2}$ is a bounded Lipschitz domain. 

According to Lemma \ref{L:CDinH1}, the operator $D^{\beta}_{k}$ is compact. This implies that it, and with it the coefficient $\beta$, hence the magnetic permeability $\mu$, will disappear from the further discussion. It has no influence on the Fredholm properties of the TE problem. Note that also in the previous paragraph the only condition on $\mu$ we encountered  was $\mu_{r}\ne0$. This was simply a necessary prerequisite for the definition of the coefficient $\nu$ in the electric VIE \eqref{E:VIE-TE-E12}. The condition $\nu\ne1$ that is required according to Theorem~\ref{T:SessA0+B0} is always satisfied if $\nu$ is defined from $\mu_{r}$ as in \eqref{E:eta-nu}.

\begin{theorem}
\label{T:TE-H3-Fred}
Let $\Omega\subset\R^{2}$ be a bounded Lipschitz domain. Let the coefficients $\alpha$ and $\beta$ be defined by
$$
 \alpha=1-\tfrac1{\epsilon_{r}}, \qquad  \beta=\mu_{r}-1
$$
and satisfy the conditions \eqref{E:ab}. Then the operator 
$\Id-C_{k}^{\alpha}-D_{k}^{\beta}$ of the VIE \eqref{E:VIE-TE-H3} is Fredholm in $H^{1}(\Omega)$ if and only if the boundary integral operator
$$
  (1-\tfrac\alpha2)\Id - \alpha K: H^{1/2}(\Gamma)\to H^{1/2}(\Gamma)
$$
is Fredholm, where $K$ is the boundary integral operator of the double layer potential, see \eqref{E:K} below.\\ 
A sufficient condition for this is that for all $x\in\Gamma$ and $\sigma\in\Sigma$ (defined in \eqref{E:SK'})
$$
  \epsilon_{r}(x) \ne \frac{\sigma}{\sigma-1}\,.
$$
If $\epsilon_{r}$ is constant, then this condition is also necessary.\\
If $\Gamma$ is smooth, the necessary and sufficient condition is $\epsilon_{r}(x)\ne-1$ for all $x\in\Gamma$.  
\end{theorem}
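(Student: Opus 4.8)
The plan is to reduce the scalar VIE to a boundary integral equation on $\Gamma$ by integration by parts and Green's formula, to recognise that equation as $(1-\tfrac\alpha2)\Id-\alpha K$, and to read off its essential spectrum from Lemma~\ref{L:SK'} together with the identity $\Sess(K)=\Sigma-\tfrac12$.

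First I would strip the problem down. By Lemma~\ref{L:CDinH1} the operator $D_k^\beta$ is compact, so it drops out of the Fredholm analysis; and since convolution with $g_k-g_0$ is of order $-4$ (Lemma~\ref{L:gk-g0}), the difference $C_k^\alpha-C_0^\alpha$ maps $H^1(\Omega)$ into $H^3(\Omega)$ and is compact. Thus $\Id-C_k^\alpha-D_k^\beta$ is Fredholm iff $\Id-C_0^\alpha$ is. For $u\in H^2(\Omega)$ the integration-by-parts formula \eqref{E:gammat} applied to $\bE=\alpha\vcurl u$, using $\curl(\alpha\vcurl u)=-\Div(\alpha\grad u)$ and $\bn\times\vcurl u=-\partial_n u$, gives the potential representation
$$ C_0^\alpha u = -\mathcal N\big(\Div(\alpha\grad u)\big)+\SS(\alpha\,\partial_n u), $$
where $\mathcal N$ is the Newton potential and $\SS$ the single layer potential; the general $H^1$ statement then follows by density and boundedness.

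Next I would feed Green's third identity $u=-\mathcal N(\Delta u)+\SS(\partial_n u)-\DD(\gamma_0 u)$ into this formula, $\gamma_0$ denoting the Dirichlet trace. When $\alpha$ is constant everything telescopes \emph{exactly}: $C_0^\alpha=\alpha\,C_0^1$ and $C_0^1u=u+\DD\gamma_0 u$, so that on $H^1(\Omega)$
$$ \Id-C_0^\alpha=(1-\alpha)\Id-\alpha\,\DD\gamma_0=:M . $$
For variable $\alpha\in C^1(\overline\Omega)$ the same computation produces $M$ plus remainder terms of the form $[\mathcal N\Delta,\alpha]$, $\mathcal N(\grad\alpha\!\cdot\!\grad u)$ and a single layer with density $\gamma_0u\,\partial_n\alpha$; each is a commutator of one of the integral operators with the $C^1$-multiplier $\alpha$, or a genuinely smoothing term, hence compact on $H^1(\Omega)$, so again $\Id-C_0^\alpha\equiv M$ modulo compact operators. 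I expect verifying this compactness — in particular that $[\mathcal N\Delta,\alpha]$ gains a derivative, which is exactly where the hypothesis $\alpha\in C^1$ is used — to be the main technical obstacle; an alternative that avoids a global commutator estimate is to localise with a partition of unity and freeze $\alpha$ to a constant near each point.

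Finally I would analyse $M$ through the trace. Using the jump relation $\gamma_0\DD=-\tfrac12\Id+K$, one checks that $M$ preserves $H^1_0(\Omega)$, that $M|_{H^1_0}$ is multiplication by $1-\alpha=1/\epsilon_r$ (invertible), and that $\gamma_0 M=\big[(1-\tfrac\alpha2)\Id-\alpha K\big]\gamma_0=:L\gamma_0$. Since $\gamma_0$ identifies $H^1(\Omega)/H^1_0(\Omega)$ with $H^{1/2}(\Gamma)$, this triangular structure shows that $M$, hence $\Id-C_0^\alpha$, is Fredholm iff $L$ is Fredholm on $H^{1/2}(\Gamma)$, which is the asserted equivalence. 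For the explicit condition I would use that $K$ and $\tfrac12\Id+K'$ are adjoint, so $\Sess(K)=\Sigma-\tfrac12$; for constant $\alpha\ne0$ this gives $\Sess(L)=1-\alpha\Sigma$, whence $L$ is Fredholm iff $1/\alpha\notin\Sigma$, i.e.\ $\epsilon_r\ne\sigma/(\sigma-1)$ for all $\sigma\in\Sigma$, a condition that is then necessary and sufficient. For variable $\alpha$ the localisation reduces $L$ to constant-coefficient operators; since each local essential spectrum is contained in $\Sigma-\tfrac12$, the pointwise condition $\epsilon_r(x)\ne\sigma/(\sigma-1)$ is sufficient but no longer necessary. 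The smooth case is immediate: then $\Sigma=\{\tfrac12\}$, $K$ is compact, $L$ is a compact perturbation of multiplication by $1-\tfrac\alpha2$, and Fredholmness holds iff $1-\tfrac{\alpha(x)}2\ne0$, i.e.\ $\epsilon_r(x)\ne-1$.
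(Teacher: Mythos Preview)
Your proof is correct and follows the same architecture as the paper: discard the compact pieces ($D_k^\beta$, and in your version the $g_k-g_0$ correction), integrate by parts so that the principal part of $C^\alpha$ involves a double-layer boundary contribution, and then pass to the boundary operator $(1-\tfrac\alpha2)\Id-\alpha K$ on $H^{1/2}(\Gamma)$.

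Two execution choices differ from the paper and are worth noting. First, the paper integrates by parts using the dual formula $\vcurl\chi_\Omega u=\chi_\Omega\vcurl u+\gamma'\bt\gamma u$, valid for all $u\in H^1(\Omega)$, which produces the double layer $\DD_k(\gamma(\alpha u))$ in one step; your route applies \eqref{E:gammat} to $\bE=\alpha\vcurl u$, which brings in $\partial_n u$, forces the restriction to $H^2$, and then requires Green's third identity plus a density argument to eliminate $\partial_n u$ again --- a correct but avoidable detour. Second, for the transfer to the boundary the paper uses the recombination lemma ($ST$ Fredholm iff $TS$ Fredholm) with $Sv=\epsilon_r\DD_0 v$ and $Tu=\gamma(\alpha u)$, whereas you exploit the block-triangular structure of $M$ relative to the exact sequence $0\to H^1_0(\Omega)\to H^1(\Omega)\to H^{1/2}(\Gamma)\to 0$; these are interchangeable here and yield the same boundary operator (the paper's $\DD_0(\gamma(\alpha\cdot))$ and your $\alpha\DD_0\gamma_0$ differ by the compact commutator $[\alpha,\DD_0]$). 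Your treatment of the variable-$\alpha$ remainders and the final reading of the essential spectrum via $\Sess(K)=\Sigma-\tfrac12$ match the paper's.
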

\begin{proof}
Our task is reduced to the determination of the Fredholm properties of the operator $\Id-C_{k}^{\alpha}$ defined in \eqref{E:C}:
$$
  C_{k}^{\alpha}u = \curl g_{k}*(\alpha\chi_{\Omega}\vcurl u)\,.
$$
We use the partial integration formula (dual to \eqref{E:gammat}) 
\begin{equation}
\label{E:}
 \vcurl \chi_{\Omega}u = \chi_{\Omega}\vcurl u + \gamma'\bt \gamma u 
\quad\mbox{ for } u\in H^{1}(\Omega)\,,
\end{equation}
where $\bt$ is the unit tangent vector and $\gamma u$ is the trace of $u$ on $\Gamma$.
With the product rule $\alpha\vcurl u=\vcurl(\alpha u) - u\vcurl\alpha$ and the identity 
$\curl g_{k}* \vcurl = -\Delta g_{k}* = \Id +k^{2}g_{k}*$, we can write
$$
\begin{aligned}
  C_{k}^{\alpha}u &= 
   \curl g_{k}*(\alpha\vcurl\chi_{\Omega} u) -\curl g_{k}*(\gamma'\bt\gamma\alpha u)\\
   &= \curl g_{k}*(\vcurl(\alpha\chi_{\Omega} u)) 
      -\curl g_{k}*(\chi_{\Omega} u\vcurl\alpha))
      - \curl g_{k}*(\gamma'\bt \gamma \alpha u)\\
   &= \alpha u + C_{1} u + \DD_{k}(\gamma\alpha u)\,.
\end{aligned}  
$$
Here $C_{1}$ is the weakly singular integral operator given by
$$
  C_{1} u(x) = k^{2}\int_{\Omega} g_{k}(x-y)\alpha(y)u(y)\,dy
    -\curl \int_{\Omega} g_{k}(x-y) u(y)\vcurl\alpha(y)\,dy\,.
$$
It maps $L^{2}(\Omega)$ boundedly to $H^{1}(\Omega)$ and is therefore a compact operator in $H^{1}(\Omega)$. The operator $\DD_{k}$ is the Helmholtz double layer potential
$$
  -\curl g_{k}*(\gamma'\bt \gamma \alpha u)(x) = \DD_{k}(\gamma\alpha u)(x) = \int_{\Gamma} \partial_{n(y)}g_{k}(x-y)\alpha(y)u(y)\,dy\quad (x\in\Omega)\,.
$$
Thus 
$$
 (\Id - C_{k}^{\alpha}) u = (1-\alpha) u - C_{1} u - \DD_{k}(\gamma\alpha u)\,.
$$
Modulo compact operators, we are left with the operator
$$
 u\mapsto (1-\alpha)u - \DD_{0}(\gamma\alpha u) 
 = \tfrac1{\epsilon_{r}}\big(u-\epsilon_{r}\DD_{0}(\gamma\alpha u)\big)\,,
$$
where $\DD_{0}$ is the harmonic double layer potential.
Since by assumption $\epsilon_{r}\ne0$ on $\overline\Omega$, we only need to consider the operator $\Id - \epsilon_{r}\DD_{0}(\gamma\alpha \cdot)$. By the recombination lemma, the Fredholm properties of this operator on $H^{1}(\Omega)$ are the same as those of the operator
$$
  \Id - \gamma(\alpha\epsilon_{r}\DD_{0}) \; \mbox{ on } H^{\frac12}(\Gamma)\,.
$$
Here we used the factorization $\epsilon_{r}\DD_{0}(\gamma\alpha \cdot)=ST$ with
\begin{align*}
 S&: H^{\frac12}(\Gamma) \to H^{1}(\Omega)\,;\quad
 S{v} =  \epsilon_{r}\DD_{0} v\,,\\
 T&: H^{1}(\Omega) \to  H^{\frac12}(\Gamma) \,;\quad
 Tu  = \gamma(\alpha u) \,.
\end{align*}
We then use the jump relation $\gamma\DD_{0}=-\frac12\Id+K$ for the double layer potential with
\begin{equation}
\label{E:K}
 Kv(x) = \int_{\Gamma} \partial_{n(y)}g_{0}(x-y)v(y)\,ds(y) \quad(x\in\Gamma)\,.
\end{equation}
Thus
$$
 \Id - \gamma(\alpha\epsilon_{r}\DD_{0}) =
 \Id + \tfrac{\alpha\epsilon_{r}}2\Id - \alpha\epsilon_{r}K
 = \epsilon_{r}\big( (\tfrac1{\epsilon_{r}}+\tfrac\alpha2)\Id - \alpha K\big)
$$
Now $\frac1{\epsilon_{r}}+\tfrac\alpha2=1-\tfrac\alpha2$, and we see that we have to determine the Fredholm properties of the boundary integral operator
$$
 G:= (1-\tfrac\alpha2)\Id - \alpha K = \Id - \alpha(\tfrac12\Id + K)
$$
as claimed.
Since $K$ is the adjoint operator of $K'$, we have
$$
  \Sess(\tfrac12\Id+K) = \Sess(\tfrac12\Id+K')=\Sigma\,.
$$
If $\alpha$ is constant on $\Gamma$, we see that $G$ is Fredholm if and only if $\alpha\sigma\ne1$ for all $\sigma\in\Sigma$. This is equivalent to the stated condition 
$\epsilon_{r} \ne \frac{\sigma}{\sigma-1}$. 
If $\alpha$ is not constant on $\Gamma$, we can use a standard localization procedure based on the fact that the commutator between the singular integral operator $K$ and the operator of multiplication with a smooth function is compact in $H^{\frac12}(\Gamma)$. If the condition $\epsilon_{r}(x) \ne \frac{\sigma}{\sigma-1}$ for all $\sigma\in\Sigma$ is satisfied, one can construct a local regularizer using an inverse modulo compact operators for the operator with $\epsilon_{r}$ frozen in the point $x$. These local regularizers are then patched together using a partition of unity, giving a global regularizer that shows that $G$ is Fredholm. We omit the technical details here, see \cite[Section 3]{sakly2014}. 
Finally, if $\Gamma$ is smooth, then $K'$ is compact, therefore $G$ is Fredholm if and only if $(1-\tfrac\alpha2)\Id $ is. This is the case if and only if $\alpha(x)\ne2$ for all $x\in\Gamma$, which is equivalent to $\epsilon_{r}(x)\ne-1$.
\end{proof}

\section{Symmetry of the spectrum}\label{S:sym}

The condition on $\epsilon_{r}$ in Theorem~\ref{T:TE-H3-Fred}: $\epsilon_{r}\ne\frac{\sigma}{\sigma-1}$ is not the same as the one given in Corollary~\ref{C:A+BFredholm}:
$\epsilon_{r}\ne1-\frac1\sigma$.
 They coincide only if 
$$
  \sigma\in\Sigma \quad \Longleftrightarrow\quad 1-\sigma\in\Sigma.
$$
This symmetry of the essential spectrum of the operator $\frac12\Id+K'$ with respect to the point $\frac12$ is a non-trivial result that may be new. It can in fact be seen as a consequence of the fact that both VIEs considered in Corollary~\ref{C:A+BFredholm} via the electric field formulation and in Theorem~\ref{T:TE-H3-Fred} via the magnetic field formulation are equivalent to the same Maxwell transmission problem. Since this amounts to a very indirect way of proving the result, we shall give an independent proof below that does not use volume integral equations. Note that the result is known for smooth domains where $\Sigma=\{\frac12\}$ and for piecewise smooth domains where Mellin analysis shows that $\Sigma$ is an interval with midpoint $\frac12$, but we show it for arbitrary bounded Lipschitz domains in $\R^{2}$. It is probably not true in higher dimensions.

\begin{theorem}
\label{T:sym}
 Let $\Omega\subset\R^{2}$ be a bounded domain with a Lipschitz boundary $\Gamma$. Let $K$ be the boundary integral operator of the harmonic double layer potential acting on $H^{\frac12}(\Gamma)$. Then 
\begin{equation}
\label{E:sym}
  \Sess(K) = \Sess(-K)\,.
\end{equation}
\end{theorem}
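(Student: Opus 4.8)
The plan is to deduce the symmetry from a duality, special to two dimensions, between the electrostatic transmission problem with contrast $\epsilon$ and the one with contrast $\tfrac1\epsilon$. First I would record a trivial reduction: since multiplying by $-1$ does not affect Fredholmness, $\mu\Id-(-K)=\mu\Id+K$ is Fredholm if and only if $(-\mu)\Id-K$ is, so $\Sess(-K)=-\Sess(K)$ and the assertion \eqref{E:sym} is equivalent to the statement that $\Sess(K)$ is symmetric about the origin. Because $K=(K')^{*}$ and, by Lemma~\ref{L:SK'}, $\tfrac12\Id+K'$ is selfadjoint for a suitable scalar product, its spectrum is real, so $\Sess(K)=\Sess(K')=\Sigma-\tfrac12\subset(-\tfrac12,\tfrac12)$; the claim is therefore equivalent to $\Sigma$ being symmetric about $\tfrac12$, i.e. $\sigma\in\Sigma\iff1-\sigma\in\Sigma$.

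Next I would attach to each $\epsilon\in\C\setminus\{0,1\}$ the scalar transmission problem for a function $u$ that is harmonic in $\Omega$ and in $\R^{2}\setminus\overline\Omega$, continuous across $\Gamma$, and whose weighted normal flux is continuous, $[\epsilon\,\partial_{n}u]=0$ (with $\epsilon$ inside and $1$ outside), subject to decay at infinity. Representing $u=\SS\phi$ by a single layer potential makes the continuity of $u$ automatic, and the jump relations $\partial_{n}^{\mp}\SS=(\pm\tfrac12\Id+K')$ turn the flux condition into $\bigl(\lambda(\epsilon)\Id+K'\bigr)\phi=0$ with the M\"obius factor $\lambda(\epsilon)=\frac{\epsilon+1}{2(\epsilon-1)}$. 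The standard fact for Lipschitz $\Gamma$ that the single layer is an isomorphism onto the right trace space then gives: the transmission problem is Fredholm precisely when $\lambda(\epsilon)\Id+K'$ is Fredholm on $H^{-\frac12}(\Gamma)$, that is, precisely when $-\lambda(\epsilon)=\tfrac1{1-\epsilon}-\tfrac12\notin\Sess(K')$.

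The heart of the argument is the two-dimensional duality. If $\Div(\epsilon\grad u)=0$ in each subdomain, then the flux field $\epsilon\grad u$ is divergence-free, hence equals $\vcurl\psi$ for a stream function $\psi$; equivalently $\tfrac1\epsilon\grad\psi=R\grad u$, where $R$ is the rotation by $\tfrac\pi2$ sending $\grad$ to $\vcurl$. Since $R$ carries harmonic gradients to divergence-free curls, $\psi$ satisfies $\Div(\tfrac1\epsilon\grad\psi)=0$, so $\psi$ solves the transmission problem with contrast $\tfrac1\epsilon$. Under $R$ the normal derivative becomes the tangential derivative and conversely, so the Dirichlet matching $[u]=0$ turns into the flux matching $[\tfrac1\epsilon\partial_{n}\psi]=0$, and the flux matching $[\epsilon\partial_{n}u]=0$ turns into $[\psi]=0$. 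Thus the $\epsilon$-problem and the $\tfrac1\epsilon$-problem are simultaneously Fredholm. A direct computation gives $\lambda(\tfrac1\epsilon)=-\lambda(\epsilon)$, so this equivalence says exactly that $\lambda(\epsilon)\Id+K'$ is Fredholm if and only if $-\lambda(\epsilon)\Id+K'$ is; as $\epsilon$ ranges over $\C\setminus\{0,1\}$ the value $\lambda(\epsilon)$ sweeps $\C\setminus\{-\tfrac12\}$, covering in particular all of $\Sess(K')\subset(-\tfrac12,\tfrac12)$, whence $\Sess(K')$ is symmetric about $0$. Passing to the adjoint delivers the claim for $K$.

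The step I expect to be the main obstacle is making the transmission/stream-function correspondence fully rigorous on a merely Lipschitz domain. Three points need care: the exterior problem in $\R^{2}$ must be posed in weighted spaces accommodating the logarithmic behaviour of $g_{0}$ together with the constraints $\int_{\Gamma_{j}}\partial_{n}u\,ds=0$ on each connected component $\Gamma_{j}$ of $\Gamma$; the stream function $\psi$ exists only locally and may acquire periods when $\Omega$ or its complement is multiply connected, so the conjugation $u\mapsto\psi$ is an isomorphism only modulo finite-dimensional spaces -- harmless for the essential spectrum but requiring explicit bookkeeping; and one must verify that both the equivalence ``transmission problem Fredholm $\iff$ boundary operator Fredholm'' and the duality map are realised by bounded operators with bounded inverses on the relevant $H^{\pm\frac12}(\Gamma)$ trace spaces, including near the endpoints of $\Sigma$. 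Everything else is routine once these function-analytic points are settled.
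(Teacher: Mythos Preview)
Your proof is correct in its essential idea---the two-dimensional duality $\epsilon\leftrightarrow\tfrac1\epsilon$ via the stream function, together with the computation $\lambda(\tfrac1\epsilon)=-\lambda(\epsilon)$---and this is exactly the mechanism the paper uses. The implementations differ in one respect that is worth noting.

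You pose the transmission problem on the whole plane $\R^{2}$, with $\Omega$ inside and $\R^{2}\setminus\overline\Omega$ outside. This forces you to confront precisely the three obstacles you list: weighted spaces for the logarithmic exterior Laplacian, possible periods of the stream function when $\Omega$ or its complement is not simply connected, and bounded invertibility of the conjugation map on the correct trace spaces. These can all be handled, but each requires real work.

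The paper sidesteps every one of these issues by a localisation trick: it embeds $\Gamma$ as an \emph{interior} interface in a larger bounded auxiliary domain (call it $\widetilde\Omega$), taking $\epsilon=\epsilon_{r}$ inside $\Gamma$ and $\epsilon=1$ between $\Gamma$ and $\partial\widetilde\Omega$, and proves (Lemma~\ref{L:symeps}) the abstract equivalence ``$\Div\epsilon\grad$ Fredholm from $H^{1}_{0}(\widetilde\Omega)$ to $H^{-1}(\widetilde\Omega)$ $\Leftrightarrow$ $\Div\tfrac1\epsilon\grad$ Fredholm from $H^{1}(\widetilde\Omega)$ to its dual''. The $\epsilon\leftrightarrow\tfrac1\epsilon$ swap is accompanied by a Dirichlet~$\leftrightarrow$~Neumann swap on the artificial outer boundary $\partial\widetilde\Omega$; but both boundary conditions lead, via Green/Neumann functions, to operators on $H^{-\frac12}(\Gamma)$ that differ from $K'$ only by compact perturbations (Lemma~\ref{L:Var-BIE}), so the essential spectrum is unaffected. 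The proof of Lemma~\ref{L:symeps} itself is purely algebraic---a factorisation lemma on $\ker S\cap\im T$ combined with the two Hodge decompositions \eqref{E:splitL2} and \eqref{E:H0curl0} and the two-dimensional identity $\curl\tfrac1\epsilon\vcurl=-\Div\tfrac1\epsilon\grad$---so no stream function is ever constructed explicitly, and the topological bookkeeping reduces to the finite dimensionality of the harmonic cohomology spaces $\mathcal H_{0}$, $\mathcal H_{1}$.

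In short: same duality, but the paper trades your exterior-domain analysis for a bounded auxiliary domain plus a compact-perturbation argument, which cleanly eliminates the obstacles you anticipate.
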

This symmetry implies that the operators $\frac12\Id+K$ and $\frac12\Id-K$ in $H^{\frac12}(\Gamma)$ and the operators $\frac12\Id+K'$ and $\frac12\Id-K'$ in $H^{-\frac12}(\Gamma)$ all have the same essential spectrum $\Sigma$, and the same holds for the Helmholtz double layer potential.

The proof uses the equivalence between the boundary integral equation and a scalar transmission problem together with the following known symmetry result for scalar elliptic boundary value problems.
\begin{lemma}
\label{L:symeps}
 Let $\Omega\subset\R^{2}$ be a bounded Lipschitz domain and let the complex-valued coefficient function $\epsilon\in L^{\infty}(\Omega)$ satisfy $\frac1\epsilon\in L^{\infty}(\Omega)$. Then
\begin{equation}
\label{E:divepsgrad}
   \Div\epsilon\grad:\; H^{1}_{0}(\Omega) \to H^{-1}(\Omega)
\end{equation}
is Fredholm if and only if
\begin{equation}
\label{E:div1/epsgrad}
   \Div\tfrac1\epsilon\grad:\; H^{1}(\Omega) \to \big(H^{1}(\Omega)\big)'
\end{equation}
is Fredholm.
\end{lemma}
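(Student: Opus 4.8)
The plan is to reduce both differential operators to compressions of the multiplication operator by $\epsilon$ (resp.\ $1/\epsilon$) acting on the Helmholtz summands of $L^2(\Omega)^2$, to transport one compression onto the other by the planar rotation $R\bF=(F_2,-F_1)^\top$ (the two-dimensional ingredient that makes the symmetry possible), and finally to invoke the Schur-complement characterization of Fredholmness in the Calkin algebra.

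First I would record the variational reductions. The operator in \eqref{E:divepsgrad} is minus the operator of the sesquilinear form $(u,v)\mapsto\int_\Omega\epsilon\,\grad u\cdot\grad\bar v$ on $H^1_0(\Omega)$. Since $\grad:H^1_0(\Omega)\to\grad H^1_0(\Omega)$ is an isomorphism (Poincar\'e) and $\Div:L^2(\Omega)^2\to H^{-1}(\Omega)$ is its negative adjoint, this operator is Fredholm iff the compression $\Lambda_{\mathrm D}:=P_{\mathrm D}\,\epsilon|_{\grad H^1_0(\Omega)}$ is Fredholm on $\grad H^1_0(\Omega)$, where $P_{\mathrm D}$ is the orthogonal projection onto $\grad H^1_0(\Omega)$ in the decomposition \eqref{E:splitL2}. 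In the same way, using that $\grad:H^1(\Omega)/\C\to\grad H^1(\Omega)$ is an isomorphism, the operator in \eqref{E:div1/epsgrad} is Fredholm iff the compression $\Lambda_{\mathrm N}:=P_{\mathrm N}\,\tfrac1\epsilon|_{\grad H^1(\Omega)}$ is Fredholm on $\grad H^1(\Omega)$, with $P_{\mathrm N}$ the orthogonal projection onto $\grad H^1(\Omega)$ from \eqref{E:H0curl0}. The hypothesis $1/\epsilon\in L^\infty$ guarantees that multiplication by $\epsilon$ is boundedly invertible on $L^2(\Omega)^2$, which is what makes these reductions legitimate.

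Next I would use the rotation $R$, a unitary operator with $R^2=-\Id$ that commutes with multiplication by the scalar $\epsilon$ and satisfies $R\grad u=\vcurl u$. It maps $\grad H^1_0(\Omega)$ onto $\mathbf{H}_0(\curl0,\Omega)$, because $\vcurl u$ is divergence free with $\bn\cdot\vcurl u=\partial_t u$, which vanishes for $u\in H^1_0(\Omega)$; this identification holds modulo the finite-dimensional space of harmonic fields that appears when $\Omega$ is not simply connected. Conjugating by $R$ and using $R\epsilon=\epsilon R$, the operator $\Lambda_{\mathrm D}$ becomes unitarily equivalent to the compression of multiplication by $\epsilon$ to $\mathbf{H}_0(\curl0,\Omega)$. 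The point of this step is that $\grad H^1(\Omega)$ and $\mathbf{H}_0(\curl0,\Omega)$ are exactly the two orthogonal summands of the single decomposition \eqref{E:H0curl0}: after rotation, $\Lambda_{\mathrm D}$ is the compression of $\epsilon$ to one summand while $\Lambda_{\mathrm N}$ is the compression of $1/\epsilon$ to the complementary summand. It then remains to prove the abstract fact that, for a boundedly invertible operator $\mathcal E$ on a Hilbert space split orthogonally as $L^2(\Omega)^2=V\oplus W$ with complementary projections $p,q$, the corner $q\mathcal E q|_W$ is Fredholm iff the complementary corner $p\mathcal E^{-1}p|_V$ is Fredholm. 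Passing to the Calkin algebra $\mathcal C$, where $\pi(\mathcal E)$ is invertible with inverse $\pi(\mathcal E^{-1})$ and $\pi(p),\pi(q)$ are complementary idempotents, this is precisely the classical Schur-complement statement that $\pi(q)\pi(\mathcal E)\pi(q)$ is invertible in $\pi(q)\mathcal C\pi(q)$ iff $\pi(p)\pi(\mathcal E)^{-1}\pi(p)$ is invertible in $\pi(p)\mathcal C\pi(p)$, Fredholmness of a corner operator corresponding to invertibility of the matching element in the corner algebra. Applying this with $V=\grad H^1(\Omega)$, $W=\mathbf{H}_0(\curl0,\Omega)$ and $\mathcal E$ the multiplication by $\epsilon$ identifies the Fredholmness of the rotated $\Lambda_{\mathrm D}$ with that of $\Lambda_{\mathrm N}$, which is the claim.

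The main obstacle I anticipate is bookkeeping rather than conceptual: the rotation identifies the Dirichlet and Neumann Helmholtz decompositions only up to the finite-dimensional spaces of harmonic fields and constants that distinguish $H^1_0(\Omega)$ from $H^1(\Omega)$ and arise for multiply connected $\Omega$, and one must check that these discrepancies shift kernels and cokernels by only finite dimensions, so that Fredholmness (though not the index data) is preserved. A secondary point needing care is the identification, in the Calkin-algebra step, of Fredholmness of the genuine corner operators on $V$ and $W$ with invertibility in the respective corner subalgebras, and the explicit verification that $\Div$ and $\grad$ are mutually adjoint with the stated mapping properties, so that the passage from form to operator in the first step is exact.
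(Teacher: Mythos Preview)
Your argument is correct and takes a genuinely different route from the paper's. The paper proves an elementary factorization lemma (Lemma~\ref{L:ST}): for $S$ surjective and $T$ injective, $ST$ is Fredholm iff $\dim(\ker S\cap\im T)<\infty$ and $\codim(\ker S+\im T)<\infty$. It applies this first with $S=\Div$, $T=\epsilon\grad$, turning Fredholmness of \eqref{E:divepsgrad} into a condition on the pair of subspaces $\vcurl H^{1}(\Omega)$ and $\epsilon\grad H^{1}_{0}(\Omega)$ in $L^{2}(\Omega)^{2}$ (modulo finite-dimensional harmonic fields), then \emph{divides by $\epsilon$}, and reapplies the lemma with $S=\curl$, $T=\tfrac1\epsilon\vcurl$, finishing with the two-dimensional identity $\curl\tfrac1\epsilon\vcurl=-\Div\tfrac1\epsilon\grad$. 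Your approach reaches the same endpoint by an operator-algebraic path: you realize both differential operators as compressions of multiplication by $\epsilon$ (resp.\ $\epsilon^{-1}$) to complementary orthogonal summands of a single Hodge decomposition---the rotation $R$ being precisely what makes the Dirichlet summand of one decomposition coincide, up to finite dimension, with the complement of the Neumann summand in the other---and then invoke the Schur-complement identity in the Calkin algebra (for invertible $\mathcal E$, the corner $q\mathcal Eq$ is invertible iff the opposite corner $p\mathcal E^{-1}p$ is). Both proofs rest on the same two pillars, invertibility of multiplication by $\epsilon$ and the planar relation $R\grad=\vcurl$; yours makes the duality between complementary corners structurally transparent, while the paper's stays at the level of subspace intersections and sums and avoids the Calkin machinery. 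One cosmetic remark: the space you reach via $R$ and justify by ``$\vcurl u$ is divergence free with $\bn\cdot\vcurl u=\partial_t u=0$'' is $(\grad H^{1})^{\perp}=\bH_{0}(\Div0,\Omega)$; calling it $\bH_{0}(\curl0,\Omega)$ follows the paper's labeling of \eqref{E:H0curl0} but does not match the standard meaning of that symbol, so it would be cleaner to name it accordingly.
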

This lemma is due to \cite[Theorem 4.6]{BBChesnelCiarlet_2D2014}, where it was proved under the hypotheses that $\Omega$ is simply connected and that $\epsilon$ is real-valued. We give an independent proof that does not need these additional assumptions. Note that, as $\Delta:H^{1}_{0}(\Omega)\to H^{-1}(\Omega)$ is an isomorphism, 
the operator in \eqref{E:divepsgrad} is Fredholm if and only if the mapping $u\mapsto w$ defined by
\begin{equation}
\label{E:epsvar}
 \int_{\Omega}\epsilon\grad u\cdot\grad v\,dx = \int_{\Omega}\grad w\cdot\grad v\,dx
 \quad\mbox{ for all }v\in H^{1}_{0}(\Omega)
\end{equation}
is Fredholm in $H^{1}_{0}(\Omega)$. Similarly, the operator in \eqref{E:div1/epsgrad} is Fredholm if and only if the mapping $u\mapsto w$ defined by
\begin{equation}
\label{E:1/epsvar}
 \int_{\Omega}\tfrac1\epsilon\grad u\cdot\grad v\,dx 
  = \int_{\Omega}\grad w\cdot\grad v\,dx
 \quad\mbox{ for all }v\in H^{1}(\Omega)
\end{equation}
is Fredholm in $H^{1}(\Omega)$.
\begin{proof}
We use a simple linear algebra lemma whose elementary proof we leave to the reader.
\begin{lemma}
\label{L:ST}
Let $X$, $Y$, $Z$ be vector spaces and $S:Y\to Z$, $T:X\to Y$ be linear operators such that $S$ is surjective and $T$ is injective. Then 
$T$ induces an isomorphism between $\ker(ST)$ and $\ker S\cap \im T$, and 
$S$ induces an isomorphism between the quotient spaces 
$Y/(\ker S+\im T)$ and $Z/\im(ST)$. In particular, $ST:X\to Z$ is an isomorphism if and only if $Y=\ker S\oplus\im T$, and $ST$ is a Fredholm operator if and only if
$$
  \dim(\ker S\cap \im T)<\infty \quad\mbox{ and }
  \codim(\ker S+\im T)<\infty\,.
$$ 
\end{lemma}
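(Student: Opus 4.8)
The plan is to prove the two isomorphism assertions directly by chasing elements, and then to read off the two ``in particular'' claims as purely formal consequences. Everything takes place in the category of vector spaces, so no topology, continuity, or closedness of ranges enters; the only maps I use are $S$, $T$, and the maps they induce on the relevant subspaces and quotients.

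First I would establish that $T$ induces an isomorphism $\ker(ST)\cong\ker S\cap\im T$. For $x\in\ker(ST)$ one has $S(Tx)=0$, so $Tx\in\ker S$, and trivially $Tx\in\im T$; hence $T$ maps $\ker(ST)$ into $\ker S\cap\im T$. Injectivity of this restriction is inherited from the injectivity of $T$. For surjectivity, given $y\in\ker S\cap\im T$, write $y=Tx$ (possible since $y\in\im T$); then $STx=Sy=0$, so $x\in\ker(ST)$ and $y$ lies in the image of the restricted map. This yields the first isomorphism.

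Next I would construct the isomorphism $Y/(\ker S+\im T)\cong Z/\im(ST)$ induced by $S$, where the candidate map sends the class of $y$ to the class of $Sy$. The one step requiring a little care is well-definedness: if $y=y_{0}+Tx$ with $y_{0}\in\ker S$, then $Sy=STx\in\im(ST)$, so the class of $Sy$ does not change. Surjectivity follows from the surjectivity of $S$ onto $Z$. For injectivity, if $Sy\in\im(ST)=S(\im T)$, say $Sy=STx$, then $y-Tx\in\ker S$, so $y\in\ker S+\im T$ and its class vanishes. This gives the second isomorphism.

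Finally I would deduce the two characterizations formally. Since $ST$ is injective exactly when $\ker(ST)=\{0\}$ and surjective exactly when $Z/\im(ST)=\{0\}$, the first isomorphism converts injectivity into $\ker S\cap\im T=\{0\}$ and the second converts surjectivity into $Y=\ker S+\im T$; together these give the isomorphism criterion $Y=\ker S\oplus\im T$. For the Fredholm statement, the two isomorphisms give $\dim\ker(ST)=\dim(\ker S\cap\im T)$ and $\codim\im(ST)=\codim(\ker S+\im T)$, so finiteness of both the kernel dimension and the cokernel dimension of $ST$ is equivalent to the stated pair of conditions. There is no real obstacle here: the entire content is the element-chasing for the two induced maps, and the only step that is not completely automatic is the well-definedness of the quotient map in the second isomorphism.
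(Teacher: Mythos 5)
Your proof is correct and complete: both induced maps are well defined, and the injectivity/surjectivity checks and the deduction of the two ``in particular'' statements are all sound. The paper explicitly leaves the proof of this lemma to the reader, and your element-chasing argument is exactly the elementary proof that is intended, so there is nothing to compare beyond noting that you have supplied the omitted details correctly.
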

We apply this lemma to the situation
$$
 \Div: L^{2}(\Omega)^{2} \to H^{-1}(\Omega)\,,\quad
 \epsilon\grad: H^{1}_{0}(\Omega) \to L^{2}(\Omega)^{2}
$$
and obtain that 
$\Div\epsilon\grad:\; H^{1}_{0}(\Omega) \to H^{-1}(\Omega)$ is Fredholm if and only if 
$$
  \dim(\bH(\Div0,\Omega)\cap\epsilon\!\grad H^{1}_{0}(\Omega)) <\infty
$$
$$
  \;\mbox{ and }
  \codim(\bH(\Div0,\Omega)+\epsilon\!\grad H^{1}_{0}(\Omega)) <\infty
  \;\mbox{ in }L^{2}(\Omega)^{2}\,.
$$
Using the well known fact that there is a finite dimensional space $\mathcal{H}_{0}$ such that 
$\bH(\Div0,\Omega)=\vcurl H^{1}(\Omega)+\mathcal{H}_{0}$, we see that this condition is equivalent to the fact that
\begin{equation}
\label{E:curlH1+epsgradH1}
 \dim(\vcurl H^{1}(\Omega)\cap\epsilon\!\grad H^{1}_{0}(\Omega)) <\infty
  \;\mbox{ and }
  \codim(\vcurl H^{1}(\Omega)+\epsilon\!\grad H^{1}_{0}(\Omega)) <\infty\,.
\end{equation}
Dividing by $\epsilon$, we see that the latter condition is equivalent to
$$
 \dim(\tfrac1\epsilon\vcurl H^{1}(\Omega)\cap\grad H^{1}_{0}(\Omega)) <\infty
  \;\mbox{ and }
  \codim(\tfrac1\epsilon\vcurl H^{1}(\Omega)+\grad H^{1}_{0}(\Omega)) <\infty\,.
$$
Using the definition \eqref{E:H0curl0}, it is also well known that there exists a finite dimensional space $\mathcal{H}_{1}$ such that
$\bH_{0}(\curl0,\Omega)=\grad H^{1}_{0}(\Omega)+\mathcal{H}_{1}$,  and therefore we obtain the equivalent condition
$$
 \dim(\tfrac1\epsilon\vcurl H^{1}(\Omega)\cap \bH_{0}(\curl0,\Omega)) <\infty
  \;\mbox{ and }
  \codim(\tfrac1\epsilon\vcurl H^{1}(\Omega)+ \bH_{0}(\curl0,\Omega)) <\infty\,.
$$
According to Lemma \ref{L:ST}, applied to the situation
$$
 \curl: L^{2}(\Omega)^{2} \to \big(H^{1}(\Omega)\big)'\,,\quad
 \tfrac1\epsilon\vcurl: H^{1}(\Omega) \to L^{2}(\Omega)^{2}
$$
the latter condition is equivalent to the Fredholmness of the operator
$$
  \curl\tfrac1\epsilon\vcurl: H^{1}(\Omega)\to \big(H^{1}(\Omega)\big)'\,.
$$
We finally note that in two dimensions,
$$
  \curl\tfrac1\epsilon\vcurl = -\Div\tfrac1\epsilon\grad\,.
$$
Therefore the last condition expresses the fact that the operator in \eqref{E:div1/epsgrad} is Fredholm.
\end{proof}
In order to apply this result to our boundary integral operator, we consider the following situation: $\Omega=\Omega^{-}\cup\Gamma\cup\Omega^{+}$, where the interior interface $\Gamma$ is Lipschitz and satisfies
$\Gamma=\partial\Omega^{-}=\partial\Omega^{+}\setminus\partial\Omega$. The coefficient function $\epsilon$ is piecewise constant, $\epsilon=1$ in $\Omega^{+}$, $\epsilon=\epsilon_{r}\in\C$ in $\Omega^{-}$. Define $K'$ as the boundary integral operator of the normal derivative of the single layer potential, adjoint of the double layer potential, acting in the space $H^{-\frac12}(\Gamma)$, see \eqref{E:K'}. In this situation, we have the following result. 
\begin{lemma}
\label{L:Var-BIE}
 The operator $\Div\epsilon\grad$, acting either from $H^{1}_{0}(\Omega)$ to $H^{-1}(\Omega)$ or from $H^{1}(\Omega)$ to $\big(H^{1}(\Omega)\big)'$, is Fredholm if and only if the integral operator 
$$
  \frac{\epsilon_{r}+1}2\Id + (\epsilon_{r}-1)K'
$$
is a Fredholm operator in $H^{-\frac12}(\Gamma)$.
\end{lemma}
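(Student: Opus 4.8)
The plan is to reduce the transmission problem governed by $\Div\epsilon\grad$ to an indirect boundary integral equation posed on the two \emph{disjoint} Lipschitz curves $\Gamma$ and $\partial\Omega$, and then to exploit the positive distance between them. Writing $\epsilon=1$ in $\Omega^{+}$ and $\epsilon=\epsilon_{r}$ in $\Omega^{-}$, the variational equation $\int_{\Omega}\epsilon\grad u\cdot\grad v=\langle f,v\rangle$ is, in strong form, the transmission problem $\Delta u=0$ in $\Omega^{-}$ and in $\Omega^{+}$, with the continuity $[u]=0$ and the flux condition $\epsilon_{r}\partial_{n}u^{-}=\partial_{n}u^{+}$ on $\Gamma$ (where $\bn$ is the outward normal of $\Omega^{-}$), completed by $u=0$ on $\partial\Omega$ in the $H^{1}_{0}$ case, respectively $\partial_{n}u=0$ on $\partial\Omega$ in the $H^{1}$ case. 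First I would record the standard fact that, because the Laplace problems in $\Omega^{-}$ and $\Omega^{+}$ (with Dirichlet, resp.\ mixed, data) are well posed, Fredholmness of $\Div\epsilon\grad$ in either functional setting is equivalent to Fredholmness of the associated boundary problem on $\Gamma\cup\partial\Omega$.

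Next I would represent the solution by a harmonic single layer potential on the whole skeleton, $u=\SS_{\Gamma}\phi+\SS_{\partial\Omega}\chi$ with densities $\phi\in H^{-\frac12}(\Gamma)$ and $\chi\in H^{-\frac12}(\partial\Omega)$. Single layer potentials are continuous across both curves, so $[u]=0$ holds automatically, and it remains to impose the flux condition on $\Gamma$ and the outer condition on $\partial\Omega$. Using the interior jump relation $\partial_{n}^{-}\SS_{\Gamma}=\tfrac12\Id+K'$ of \eqref{E:nA0} and its exterior companion $\partial_{n}^{+}\SS_{\Gamma}=-\tfrac12\Id+K'$, the flux condition on $\Gamma$ becomes $\big[\tfrac{\epsilon_{r}+1}2\Id+(\epsilon_{r}-1)K'\big]\phi+(\epsilon_{r}-1)R_{1}\chi=0$, where $R_{1}$ is the contribution of $\SS_{\partial\Omega}\chi$ to the normal trace on $\Gamma$; the outer condition on $\partial\Omega$ becomes a classical Laplace boundary integral equation $P_{\partial\Omega}\chi+R_{2}\phi=(\text{data})$, with $P_{\partial\Omega}$ the single layer operator $V_{\partial\Omega}$ in the Dirichlet case and $\tfrac12\Id+K'_{\partial\Omega}$ in the Neumann case. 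This yields a $2\times2$ block system whose diagonal blocks are exactly $G':=\tfrac{\epsilon_{r}+1}2\Id+(\epsilon_{r}-1)K'$ on $\Gamma$ and $P_{\partial\Omega}$ on $\partial\Omega$.

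The decisive observation is that the off-diagonal operators $R_{1},R_{2}$ have kernels built from $g_{0}(x-y)$ and its derivatives with $x$ and $y$ lying on the disjoint curves $\Gamma$ and $\partial\Omega$; since $\operatorname{dist}(\Gamma,\partial\Omega)>0$ these kernels are smooth, so $R_{1}$ and $R_{2}$ are smoothing, hence compact. The system is therefore block-diagonal modulo compact operators, and its Fredholmness is equivalent to the simultaneous Fredholmness of the two diagonal blocks. Now $P_{\partial\Omega}$ is a standard boundary integral operator of the Laplacian on the single Lipschitz curve $\partial\Omega$ and is always Fredholm of index zero, in both the Dirichlet and the Neumann settings; consequently the system, and with it $\Div\epsilon\grad$, is Fredholm if and only if $G'$ is Fredholm in $H^{-\frac12}(\Gamma)$, which is the assertion. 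The outer boundary enters only through the always-Fredholm block $P_{\partial\Omega}$ and the compact coupling, which is precisely why the statement holds identically for $H^{1}_{0}(\Omega)$ and $H^{1}(\Omega)$.

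I expect the main obstacle to be the first, seemingly routine, step: rigorously matching Fredholmness of the \emph{volume} operator $\Div\epsilon\grad$ with that of the boundary integral \emph{system}, i.e.\ proving that the single layer ansatz is ``complete''. In two dimensions this is complicated by the logarithmic behaviour of $g_{0}$: the single layer operators $V_{\Gamma}$ and $V_{\partial\Omega}$ need not be injective (the usual capacity obstruction), so one must either normalise the scale of the curves or, more cleanly, carry the whole argument modulo compact perturbations and invoke the recombination Lemma~\ref{L:ST} (and its variant used above) to transfer Fredholmness between the potential map and the trace map. Once this equivalence is secured, the remaining ingredients—the jump relations, the compactness of the cross terms, and the Fredholmness of the $\partial\Omega$-block—are entirely standard.
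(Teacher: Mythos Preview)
Your route is correct but genuinely different from the paper's. The paper does not introduce a second density on $\partial\Omega$ at all: after the orthogonal splitting $H^{1}_{0}(\Omega)=H^{1}_{0,\Gamma}(\Omega)\oplus\mathcal{H}^{1}_{0,\Gamma}(\Omega)$ (the first summand carrying the trivially invertible part), it represents every piecewise harmonic function in $\mathcal{H}^{1}_{0,\Gamma}(\Omega)$ as a single layer potential on $\Gamma$ alone, but with kernel the \emph{Dirichlet Green function} $G_{D}$ of $\Omega$ (respectively the Neumann function $G_{N}$). This potential satisfies the outer boundary condition automatically, so the transmission condition on $\Gamma$ yields directly the scalar equation $\tfrac{\epsilon_{r}+1}{2}\phi+(\epsilon_{r}-1)K'_{D}\phi=\psi$; the smoothness of $G_{D}-g_{0}$ then makes $K'_{D}-K'$ compact, which is the exact analogue of your compact cross-terms $R_{1},R_{2}$. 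What your two-curve system buys is that it avoids the less familiar Green function and makes explicit why the outer boundary condition is immaterial for the essential spectrum. What the paper's Green-function trick buys is precisely the elimination of the obstacle you yourself flag: the map $\phi\mapsto\SS_{D}\phi$ is an honest isomorphism from $H^{-\frac12}(\Gamma)$ onto $\mathcal{H}^{1}_{0,\Gamma}(\Omega)$ (inverse given by the jump of $\partial_{n}$), so no two-dimensional capacity normalisation and no separate ``completeness modulo compacts'' argument is needed.
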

\begin{proof}
We consider the first case (Dirichlet condition) of $\Div\epsilon\grad$ defined on $H^{1}_{0}(\Omega)$. The second case (Neumann condition) is treated very similarly, the space $H^{1}_{0}(\Omega)$ being replaced by $H^{1}_{*}(\Omega)$, the space of $H^{1}(\Omega)$ functions of mean value zero.
We use the variational formulation \eqref{E:epsvar} for the operator $\Div\epsilon\grad$ on $H^{1}_{0}(\Omega)$. We split the space $H^{1}_{0}(\Omega)$ according to
$$
  H^{1}_{0}(\Omega) = H^{1}_{0,\Gamma}(\Omega) \oplus \mathcal{H}^{1}_{0,\Gamma}(\Omega)\,,
$$
where
$$
 H^{1}_{0,\Gamma}(\Omega) = \{u\in H^{1}_{0}(\Omega) \mid u_{|\Gamma}=0\}
   = H^{1}_{0}(\Omega^{-})\oplus H^{1}_{0}(\Omega^{+}) 
$$
and the second term $\mathcal{H}^{1}_{0,\Gamma}(\Omega)$ is defined as the orthogonal complement of $H^{1}_{0,\Gamma}(\Omega)$ in $H^{1}_{0}(\Omega)$. Orthogonality is understood in the sense of the $H^{1}_{0}$ scalar product appearing in \eqref{E:epsvar}. 

The subspace $H^{1}_{0,\Gamma}(\Omega)$ is invariant under multiplication by $\epsilon$, and it is also invariant under the operator $\Delta^{-1}\Div\epsilon\grad$ defined by the variational formulation \eqref{E:epsvar}. This operator is invertible on this subspace as soon as $\epsilon_{r}\ne0$. Indeed, if we decompose orthogonally 
$$
  u = u_{-}+u_{+}+u_{0}\;, \quad w = w_{-}+w_{+}+w_{0}
$$
with $u_{\pm},w_{\pm}\in H^{1}_{0}(\Omega^{\pm})$ and 
$u_{0},w_{0}\in \mathcal{H}^{1}_{0,\Gamma}(\Omega)$, then we can partially solve \eqref{E:epsvar}:
$$
  u_{+}=w_{+}\,,\quad u_{-}=\tfrac1\epsilon w_{-}\,.
$$
This shows that our operator $\Div\epsilon\grad$ is Fredholm if and only if in the restriction of the variational formulation \eqref{E:epsvar} to $\mathcal{H}^{1}_{0,\Gamma}(\Omega)$
\begin{equation}
\label{E:epsvarharm}
 \int_{\Omega}\epsilon\grad u_{0}\cdot\grad v\,dx = \int_{\Omega}\grad w_{0}\cdot\grad v\,dx
 \quad\mbox{ for all }v\in \mathcal{H}^{1}_{0,\Gamma}(\Omega)
\end{equation}
the mapping $u_{0}\mapsto w_{0}$ is a Fredholm operator in $\mathcal{H}^{1}_{0,\Gamma}(\Omega)$.
From the definition of $\mathcal{H}^{1}_{0,\Gamma}(\Omega)$ as orthogonal complement we find the characterization of this space
$$
  \mathcal{H}^{1}_{0,\Gamma}(\Omega) = \{u_{0}\in H^{1}_{0,\Gamma}(\Omega) \mid
    \Delta u_{0} = 0 \mbox{ in } \Omega^{-}\cup\Omega^{+}\}\,.
$$
The standard representation formula shows that the functions in this space are represented as single layer potentials
$$
  u_{0}(x)=\SS_{D}\phi(x) = \int_{\Gamma} G_{D}(x,y)\phi(y)\,ds(y)\,.
$$
Here $G_{D}$ is the Green function of the domain $\Omega$, that is the fundamental solution of $-\Delta$ satisfying homogeneous Dirichlet boundary conditions on the outer boundary $\partial\Omega$. It is a classical result that $G_{D}(x,y)=g_{0}(x-y)+h_{D}(x,y)$ with a $C^{\infty}(\Omega\times\Omega)$ function $h_{D}$. The jump relations for the single layer potential $\SS_{D}$ are then the same as for the single layer potential $\SS$, in particular for the one-sided traces of the normal derivatives $\partial_{n}^{+}$ and $\partial_{n}^{-}$ on $\Gamma$ 
$$
  \partial_{n}^{\pm} \SS_{D}\phi = \mp\tfrac12\phi + K'_{D}\phi
$$
where $K'_{D}-K'$ is a compact operator on $H^{-\frac12}(\Gamma)$, due to the smoothness of $h_{D}$. The representation of $u_{0}$ by a single layer potential defines an isomorphism between $H^{-\frac12}(\Gamma)$ and $\mathcal{H}^{1}_{0,\Gamma}(\Omega)$, with inverse given by the jump $\phi=\partial_{n}^{-}u_{0}-\partial_{n}^{+}u_{0}$.

We can now use Green's formula in $\Omega^{+}$ and $\Omega^{-}$ for \eqref{E:epsvarharm} and see that this variational formulation is equivalent to the transmission condition
$$
  \epsilon_{r}\partial_{n}^{-}u_{0}-\partial_{n}^{+}u_{0}
  = \partial_{n}^{-}w_{0}-\partial_{n}^{+}w_{0}\,.
$$
Representing $u_{0}=\SS_{D}\phi$ and $w_{0}=\SS_{D}\psi$ and using the jump relations, we find the equivalent boundary integral equation
$$
  (\epsilon_{r}+1)\tfrac12\phi + (\epsilon_{r}-1)K'_{D}\phi = \psi\,. 
$$
We have thus shown that the operator 
$(\epsilon_{r}+1)\tfrac12\Id + (\epsilon_{r}-1)K'_{D}$ is Fredholm in $H^{\frac12}(\Gamma)$ if and only if $\Div\epsilon\grad$ is Fredholm from $H^{1}_{0}(\Omega)$ to its dual space. 

This proves the ``Dirichlet'' version of the lemma. In order to show the ``Neumann'' version, one can repeat the same steps, replacing $H^{1}_{0}(\Omega)$ by $H^{1}_{*}(\Omega)$ and the Green function $G_{D}$ by the Neumann function $G_{N}$, that is the fundamental solution satisfying homogeneous Neumann boundary conditions on $\partial\Omega$. One obtains a boundary integral equation involving a ``Neumann'' version $K'_{N}$ of $K'$, which will also differ by a compact operator from $K'$ and thus have the same essential spectrum as either $K'_{D}$ or $K'$.
\end{proof}

Combining Lemmas \ref{L:symeps} and \ref{L:Var-BIE}, it is now easy to finish the proof of Theorem~\ref{T:sym}. We find that for any $\epsilon_{r}\in\C\setminus\{0\}$, the operator
$$
  (\epsilon_{r}+1)\tfrac12\Id + (\epsilon_{r}-1)K' 
$$
is Fredholm in $H^{-\frac12}(\Gamma)$ if and only if the operator
$$
  (\tfrac1{\epsilon_{r}}+1)\tfrac12\Id + (\tfrac1{\epsilon_{r}}-1)K' 
$$
is Fredholm. Setting $\epsilon_{r}=\frac{2\lambda+1}{2\lambda-1}$, we have proved that for any 
$\lambda\ne\pm\frac12$, 
$\lambda\Id +K'$ is Fredholm if and only if $\lambda\Id -K'$ is Fredholm, which concludes the proof of the theorem.




\end{document}